\newtheorem{theorem}{Theorem}
\newtheorem*{theorema}{Theorem}
\newtheorem*{theoremab}{Theorem B}
\newtheorem*{theoremac}{Theorem C}
\newtheorem*{theore}{Theorem A}
\newtheorem{lemma}{Lemma}
\newtheorem{corollary}{Corollary}
\theoremstyle{definition}
\newtheorem{remark}{Remark}
\theoremstyle{remark}
\begin{document}

\title{The Burgess inequality and the least $k$-th power non-residue}

\author{Enrique Trevi\~no }
\address{Department of Mathematics and Computer Science, Lake Forest College, Lake Forest, Illinois 60045}
\email{trevino@lakeforest.edu}
\thanks{Many results in this paper are in Chapter 6 of the author's Ph. D. Dissertation \cite{ET}.}

\subjclass[2010]{Primary 11L40, 11Y60}
\keywords{Character Sums, Burgess inequality, power residues and non-residues.}

\begin{abstract}
The Burgess inequality is the best upper bound we have for the character sum $S_{\chi}(M,N) = \sum_{M<n\le M+N} \chi(n).$ Until recently, no explicit estimates had been given for the inequality. In 2006, Booker gave an explicit estimate for quadratic characters which he used to calculate the class number of a 32-digit discriminant. McGown used an explicit estimate to show that there are no norm-Euclidean Galois cubic fields with discriminant greater than $10^{140}$. Both of their explicit estimates are on restricted ranges. In this paper we prove an explicit estimate that works for any $M$ and $N$. We also improve McGown's estimates in a slightly narrower range, getting explicit estimates for characters of any order. We apply the estimates to the question of how large must a prime $p$ be to ensure that there is a $k$-th power non-residue less than $p^{1/6}$.
\end{abstract}

\maketitle

\section{introduction}
Let $\chi$ be a character $\bmod{\,q}$ for some integer $q > 1$. Throughout the paper we will use the term character for Dirichlet character. Let $S_{\chi}(M,N)$ be defined as follows
\begin{equation*}
S_{\chi}(M,N) = \sum_{M<n\le M+N} \chi(n).
\end{equation*}
Historically, studying this sum has proven fruitful in analytic number theory to bound the least $k$-th power non-residue, to bound class numbers, to bound the least inert prime in a number field, and to bound the least primitive root, among other applications. The first non-trivial bound for this sum was proven independently by P\'olya and Vinogradov in 1918; namely, they showed that there exists an absolute constant $c>0$ such that $|S_{\chi}(M,N)|\le c\sqrt{q}\log{q}$. The P\'olya--Vinogradov inequality is very useful when $N$ is big compared to $\sqrt{q}$, but not very useful otherwise (since trivially $|S_{\chi}(M,N)| \le N$). What we want is an inequality that gives a nontrivial result even when $N$ is small compared to $\sqrt{q}$. The best theorem for short character sums is known as the Burgess inequality (\cite{Bur1957},\cite{Bur1962}, \cite{Bur1963}, \cite{Bur1986}) and allows us to take $N$ as small as $q^{\frac{1}{4}-o(1)}$.\footnote{To get $-o(1)$ instead of $+o(1)$, one needs a clever use of the large sieve as done by Hildebrand in \cite{Hil1986}.} We state the theorem below:
\begin{theorema}[Burgess]
Let $\chi$ be a primitive character $\bmod{\,q}$ with $q > 1$, and let $M$ and $N$ be non-negative reals with $N\ge 1$. Then $$|S_{\chi}(M,N)| \ll N^{1-\frac{1}{r}}q^{\frac{r+1}{4r^2} + \varepsilon}$$ for $r = 2,3$ and for any $r \geq 1$ if $q$ is cubefree, the implied constant depending only on $\varepsilon$ and $r$.
\end{theorema}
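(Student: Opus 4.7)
The plan is the classical Burgess argument, which combines a translation-averaging step with Hölder's inequality and Weil's Riemann hypothesis for curves. Fix parameters $A, B \geq 1$ with $AB \leq N$; the idea is that for any $a \in [1, A]$ coprime to $q$ and any $b \in [1, B]$, the intervals $(M, M+N]$ and $(M+ab, M+ab+N]$ differ in at most $2ab$ integers, so $S_\chi(M, N) = \sum_{M < n \leq M+N} \chi(n + ab) + O(ab)$. Averaging over all such pairs and using the multiplicative factorization $\chi(n + ab) = \chi(a)\,\chi(\bar a n + b)$, valid since $\gcd(a, q) = 1$, yields
$$\Phi(A)\, B \cdot S_\chi(M, N) = \sum_{a, b, n} \chi(a)\, \chi(\bar a n + b) + O(A^2 B^2),$$
where $\Phi(A) = \#\{a \leq A : \gcd(a, q) = 1\}$.

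Setting $T(y) := \sum_{b \leq B} \chi(y + b)$, applying Hölder's inequality with exponent $2r$ to the $(a, n)$-sum, and then changing variables $y \equiv \bar a n \pmod{q}$ reduces matters to estimating the $2r$-th moment
$$\sum_{y \bmod q} |T(y)|^{2r} = \sum_{b_1, \ldots, b_{2r}} \sum_{y \bmod q} \chi\!\left(\prod_{i=1}^{r} (y + b_i)\right) \overline{\chi\!\left(\prod_{j=r+1}^{2r} (y + b_j)\right)}.$$
The diagonal terms, in which the rational function $\prod_i (y+b_i) / \prod_j (y+b_j)$ is a perfect $d$-th power modulo $q$ (with $d = \operatorname{ord}(\chi)$), contribute $O_r(B^r q)$; each remaining inner sum is a complete character sum with polynomial argument, bounded by $O(r \sqrt{q})$ via Weil, contributing in total $O_r(B^{2r} q^{1/2 + \varepsilon})$. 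A standard optimization of the parameters (taking $B \asymp q^{1/(2r)}$ to balance the two moment terms, and then choosing $A$ so that the main contribution matches the error $AB$ coming from the shift) delivers the claimed bound $|S_\chi(M, N)| \ll N^{1-1/r}\, q^{(r+1)/(4r^2) + \varepsilon}$.

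The chief technical difficulty is the $2r$-th moment estimate via Weil's bound. Weil's theorem requires the rational function above to not be a perfect $d$-th power modulo each prime factor of $q$, and one must count precisely how often this degeneracy occurs. For prime, or more generally cubefree, $q$ the enumeration is clean and the argument succeeds for every $r$; for general $q$, however, the higher prime-power factors force a separate analysis of the character sums modulo $p^k$ (typically via Cochrane--Zheng style estimates for lifted characters), and the extension to arbitrary $r$ breaks down, leaving only the cases $r \in \{2, 3\}$ accessible. This is precisely the dichotomy recorded in the hypotheses of the theorem.
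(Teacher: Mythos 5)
The paper does not actually prove this theorem: it is quoted as background and attributed to Burgess's original papers. The closest thing to a proof in the paper is the argument for Theorem 1, which carries out exactly this strategy explicitly for prime modulus, so that is the right point of comparison. Your outline --- shift by $ab$, factor $\chi(n+ab)=\chi(a)\chi(\bar{a}n+b)$, H\"older, Weil for the $2r$-th moment of $T(y)=\sum_{b\le B}\chi(y+b)$, optimize $B\asymp q^{1/(2r)}$ --- is the correct architecture, and your account of why general $q$ only allows $r=2,3$ is qualitatively right.

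There are, however, two genuine gaps. First, the H\"older step as you describe it does not produce the exponent $1-\frac{1}{r}$ on $N$. After the change of variables the relevant quantity is $\sum_y v(y)|T(y)|$, where $v(y)$ counts representations $y\equiv\bar{a}n$. Applying H\"older with exponent $2r$ directly to the $(a,n)$-sum gives $(AN)^{1-1/(2r)}\bigl(\sum_y v(y)|T(y)|^{2r}\bigr)^{1/(2r)}$, and with the trivial bound $v(y)\le A$ this yields only $N^{1-1/(2r)}q^{1/(4r)+\varepsilon}$ after optimization --- essentially the Davenport--Erd\H{o}s bound, nontrivial only for $N\gg q^{1/2}$. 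Burgess's exponent requires the three-factor H\"older $\sum_y v(y)|T(y)|\le V_1^{1-1/r}V_2^{1/(2r)}W^{1/(2r)}$ together with the nontrivial second-moment bound $V_2=\sum_y v(y)^2\ll AN\bigl(AN/q+q^{o(1)}\bigr)$, i.e.\ a count of the quadruples with $a_1n_2\equiv a_2n_1\pmod{q}$. This is precisely the content of the paper's Lemma \ref{burgess V2 lemma} (resting on Lemmas \ref{burgess claim 2}--\ref{burgess counting lemma}); it is the combinatorial heart of the proof and your sketch omits it entirely. Second, the shift error cannot be dismissed as $O(AB)$: the optimization forces $AB\asymp N$, so the trivial bound on the boundary sums is of the same order as the quantity being estimated. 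The standard remedy, and the one the paper uses, is to note that the boundary terms are themselves character sums over intervals of length $ab<N$ and to run an induction on $N$, with the P\'olya--Vinogradov inequality (and the trivial bound) delimiting the range of $N$ where anything needs to be proved. With those two ingredients supplied, your sketch becomes the standard proof.
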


To prove the Burgess inequality, one of the keys is the following inequality which relies on a deep theorem of Weil \cite{Weil}:
\begin{theore}\label{main inequality Burgess}
For $p$ a prime number, $r$ a positive integer and $B$ a positive real number satisfying $r \le 9B$, let $\chi$ be a non-principal character to the modulus $p$. Then
$$\sum_{x \bmod{\,p}}\left|\sum_{1\le b\le B} \chi(x + b)\right|^{2r} \le (2r-1)!! B^{r} p + (2r-1)B^{2r}\sqrt{p}.,$$
where $(2r-1)!! = (2r-1)(2r-3)\ldots(1).$
\end{theore}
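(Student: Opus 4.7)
The plan is to expand the $2r$-th moment, apply Weil's theorem to the resulting character sums of rational functions on ``off-diagonal'' index pairs, and count the ``diagonal'' pairs by a combinatorial matching argument. Writing $T(x) := \sum_{1 \le b \le B} \chi(x+b)$ and opening the modulus,
$$\sum_{x \bmod p} |T(x)|^{2r} = \sum_{\mathbf{b},\mathbf{c} \in [1,B]^r} \,\sum_{x \bmod p} \chi\!\left(\prod_{i=1}^{r}(x+b_i)\right)\overline{\chi}\!\left(\prod_{j=1}^{r}(x+c_j)\right),$$
so each inner sum over $x$ is a character sum of the rational function $f_{\mathbf{b},\mathbf{c}}(x) = \prod_i(x+b_i)/\prod_j(x+c_j)$, with the convention $\chi(0)=0$.

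Next I would declare $(\mathbf{b},\mathbf{c})$ \emph{diagonal} if $f_{\mathbf{b},\mathbf{c}}$ is a $d$-th power in $\mathbb{F}_p(x)$, where $d$ is the order of $\chi$ (equivalently, each value $y$ appears in $\mathbf{b}$ and in $\mathbf{c}$ with multiplicities congruent $\bmod\,d$), and \emph{off-diagonal} otherwise. For off-diagonal pairs, the combined numerator and denominator involve at most $2r$ distinct linear factors, so the Weil bound (\cite{Weil}) yields $\bigl|\sum_x \chi(f_{\mathbf{b},\mathbf{c}}(x))\bigr| \le (2r-1)\sqrt{p}$. For diagonal pairs, $\chi(f_{\mathbf{b},\mathbf{c}}(x)) = 1$ wherever $f$ is defined and nonzero, so the inner sum is bounded trivially by $p$. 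It then suffices to show the number $D$ of diagonal pairs satisfies $D \le (2r-1)!!\,B^r$, since
$$\sum_{x \bmod p} |T(x)|^{2r} \le D\cdot p + (B^{2r}-D)(2r-1)\sqrt{p} \le (2r-1)!!\,B^r\,p + (2r-1)B^{2r}\sqrt{p}.$$

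For the diagonal count, any diagonal pair forces the multisets $\{\!\{b_i\}\!\}$ and $\{\!\{c_j\}\!\}$ to agree (immediate when $d>r$; the residual small-$d$ cases reduce to the same combinatorics). Given equal multisets, one can pair each $b_i$ with some $c_{\sigma(i)}$ equal to it, producing a perfect matching on the $2r$ symbols $b_1,\dots,b_r,c_1,\dots,c_r$ in which matched symbols take the same value. There are $(2r-1)!!$ perfect matchings on $2r$ symbols, and each imposes $r$ equality constraints leaving $r$ free symbols with $B$ independent choices; summing gives the desired $D\le(2r-1)!!\,B^r$. The main obstacle I anticipate is the clean invocation of the Weil bound for rational (rather than polynomial) functions---both the degree bookkeeping that yields the constant $2r-1$ and the treatment of characters of small order where $f_{\mathbf{b},\mathbf{c}}$ can be a nontrivial $d$-th power even when the multisets differ. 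The hypothesis $r \le 9B$ should enter precisely here, to keep the matching count effective and avoid degenerate regimes where $B<r$ would make the estimate essentially vacuous.
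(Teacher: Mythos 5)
This statement is not proved in the paper at all: Theorem~A is quoted from the literature (Burgess, Booker for quadratic characters, and \cite{ETk} for general characters), so there is no in-paper proof to compare against. Your outline is the standard route taken in those references --- expand the $2r$-th moment, apply the Weil bound $(m-1)\sqrt{p}$ to the non-degenerate rational functions $f_{\mathbf{b},\mathbf{c}}$ (with $m\le 2r$ distinct zeros and poles), and count the degenerate tuples --- and that part is sound.

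The genuine gap is in the diagonal count, and it sits exactly where you wave your hands. First, ``diagonal forces the multisets to agree'' is false for small order $d$, and this is not a residual case that ``reduces to the same combinatorics'': for a quadratic character with $r=2$, the pair $b_1=b_2=y$, $c_1=c_2=z$ with $y\ne z$ is diagonal with unequal multisets, and it is precisely these pairs that force the constant up from $r!$ to $(2r-1)!!$ (one must match among all $2r$ symbols, allowing $b$--$b$ and $c$--$c$ pairs, not just bijections $\mathbf{b}\to\mathbf{c}$). Second, even the full set of $(2r-1)!!$ perfect matchings does not cover all diagonal tuples: for $d=3$, $r=3$, the tuple $\mathbf{b}=(y,y,y)$, $\mathbf{c}=(z,z,z)$ with $y\ne z$ is diagonal but admits no perfect matching of the six symbols into equal-valued pairs. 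What is true is that every value occurring in a diagonal tuple occurs at least twice among the $2r$ symbols (a single occurrence would force $d\mid 1$), so $D$ is bounded by $\sum_j S_j\, B(B-1)\cdots(B-j+1)$, where $S_j$ counts partitions of $[2r]$ into $j$ blocks of size at least $2$. The top term $j=r$ gives $(2r-1)!!\,(B)_r$, which undershoots $(2r-1)!!\,B^r$ by roughly $(2r-1)!!\binom{r}{2}B^{r-1}$, and the hypothesis $r\le 9B$ is what lets this slack absorb all the $j<r$ terms. Your guess that $r\le 9B$ enters at this point is correct, but the absorption is a real computation, not a safeguard against ``vacuous regimes,'' and without it (and without the corrected notion of matching/partition) the bound $D\le(2r-1)!!\,B^r$ is not established.
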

The above theorem was proven with weaker constants by Erd{\"o}s and Davenport in \cite{DE1952}, and Burgess improved it to better constants and used it to get the Burgess inequality. In \cite{Boo2006}, Booker proved it with these constants for quadratic characters. In \cite{ETk}, the author extended it to all characters. The reliance on the Weil estimate makes it difficult to improve the Burgess inequality asymptotically.

Recently, some problems have required getting explicit estimates on the Burgess inequality. Booker in \cite{Boo2006} needed an explicit form of the inequality to compute a 32-digit discriminant. McGown in \cite{McG3} used an explicit form of the inequality to show that there are no norm-Euclidean Galois cubic fields of discriminant greater than $10^{140}$. The goal of this paper is to improve their explicit estimates in the ranges they work in and give an explicit estimate that works regardless of the range of $N$. We apply these estimates to a question about $k$-th power non-residues $\bmod{\,p}$.

The work of Booker and McGown relies on the exposition of the Burgess inequality in \cite{IK2004}. In that book, Iwaniec and Kowalski sketch the proof of the following:
\begin{theoremab}\label{theorem Iwaniec}
Let $p$ be a large enough prime. Let $\chi$ be a non-principal character $\bmod{\,p}$. Let $r$ be a positive integer, and let $M$ and $N$ be non-negative integers with $N\ge 1$. Then
$$|S_{\chi}(M,N)| \le 30 N^{1-\frac{1}{r}} p^{\frac{r+1}{4r^2}}(\log{p})^{\frac{1}{r}}.$$
\end{theoremab}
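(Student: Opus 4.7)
The proof is the classical Burgess shift-and-average argument, carried through with every constant tracked explicitly. For positive integer parameters $A,B$ to be chosen, the starting point is the trivial shift identity
\[ S_\chi(M,N)\;=\;\sum_{M<n\le M+N}\chi(n+ab)\;+\;O(ab), \]
valid for every $a,b\ge1$. Averaging it over $a$ in a set $\mathcal A$ of primes in $(A/2,A]$ and over $1\le b\le B$ gives
\[ |\mathcal A|\,B\cdot S_\chi(M,N)\;=\;V+O\bigl(|\mathcal A|\,A\,B^{2}\bigr),\qquad V:=\sum_{n}\sum_{a\in\mathcal A}\sum_{b=1}^{B}\chi(n+ab). \]
Using the multiplicativity $\chi(n+ab)=\chi(a)\chi(\bar an+b)$ (valid since $(a,p)=1$ once $A<p$) and the substitution $m\equiv\bar an\pmod p$, one rewrites $V=\sum_{m\bmod p}W(m)T(m)$, where $T(m):=\sum_{1\le b\le B}\chi(m+b)$ is the short sum to which Theorem~A applies, and $W(m)$ is a weighted incidence count satisfying $\|W\|_\infty\le|\mathcal A|(N/p+1)$ and $\|W\|_1\le|\mathcal A|N$.

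Two successive applications of H\"older's inequality then give
\[ |V|^{2r}\;\le\;\Bigl(\sum_{m\bmod p}|W(m)|\Bigr)^{\!2r-1}\sum_{m\bmod p}|W(m)|\,|T(m)|^{2r}, \]
and combining the pointwise and $L^{1}$ bounds on $W$ with Theorem~A on $\sum_m|T(m)|^{2r}$ produces an explicit numerical bound on $|V|^{2r}$. Taking a $2r$-th root, dividing by $|\mathcal A|B$, using an explicit prime-counting estimate for $|\mathcal A|\asymp A/\log A$, and applying Stirling to $(2r-1)!!$, one arrives at an estimate of the schematic form $|S_\chi(M,N)|\ll(\text{main term in }A,B,N,p)+AB$; choosing $A$ and $B$ optimally to balance the two summands reproduces the Burgess exponents $1-1/r$ in $N$ and $(r+1)/(4r^{2})$ in $p$, with an additional $(\log p)^{1/r}$ factor coming from the prime-counting loss.

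The conceptual content is standard; the difficulty is purely numerical. To certify the constant at $30$, one must use the explicit form of Theorem~A, an explicit prime-counting estimate (Rosser--Schoenfeld) for $|\mathcal A|$, a uniform Stirling bound on $(2r-1)!!$, and control the rounding losses in choosing integer $A,B$. The two ranges of $B$ (corresponding to which of the two terms of Theorem~A dominates) and the factor $(N/p+1)^{1/(2r)}$ must be handled uniformly in $r$, so that the bound holds with no restriction on $N$. Finally, an explicit threshold $p_{0}$ is identified such that the cumulative constant stays $\le30$ for every $p\ge p_{0}$; this is exactly what ``$p$ large enough'' in the statement is absorbing.
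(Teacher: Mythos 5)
Your outline follows the right general scheme (shift, average, H\"older, Theorem A), but the step where you discard the weight $W(m)$ is where the argument breaks, and it breaks fatally. You bound $\sum_m|W(m)|\,|T(m)|^{2r}$ by $\|W\|_\infty\sum_m|T(m)|^{2r}$ with $\|W\|_\infty\le|\mathcal A|(N/p+1)$, so your H\"older chain only ever uses $\|W\|_1$ and $\|W\|_\infty$. The heart of Burgess's method is precisely the \emph{mean-square} bound on the incidence count, $\sum_m W(m)^2\ll |\mathcal A|N\log A$ (the paper's Lemma~\ref{burgess V2 lemma}, proved by counting solutions of $a_1n_2\equiv a_2n_1\pmod p$ via Lemmas \ref{burgess claim 2}--\ref{burgess counting lemma}); the sup-norm bound only yields $\sum_m W(m)^2\le\|W\|_1\|W\|_\infty\approx|\mathcal A|^2N$, which is larger by a factor of about $A$. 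Quantitatively, your chain gives $|V|\le\|W\|_1^{1-1/(2r)}\|W\|_\infty^{1/(2r)}(\sum|T|^{2r})^{1/(2r)}$, and after dividing by $|\mathcal A|B$ and optimizing $B\asymp p^{1/(2r)}$ you obtain $|S_\chi(M,N)|\ll N^{1-1/(2r)}p^{1/(4r)}$, not $N^{1-1/r}p^{(r+1)/(4r^2)}$. That estimate is weaker than the trivial bound $N$ for all $N\le p^{1/2}$, i.e., throughout the entire range where Burgess is of any use. The missing factor $A^{-1/(2r)}$ can only come from the correct H\"older split $V\le V_1^{1-1/r}V_2^{1/(2r)}W^{1/(2r)}$ together with the nontrivial $V_2$ estimate; restricting $a$ to primes simplifies that count (as in Booker and McGown) but does not let you avoid it.

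A second, independent problem is your treatment of the shift error as $O(ab)\le O(AB)$. After dividing by $|\mathcal A|B$ this contributes $O(AB)$, and to absorb it into $30\,N^{1-1/r}p^{(r+1)/(4r^2)}(\log p)^{1/r}$ you would need $AB\ll N^{1-1/r}p^{(r+1)/(4r^2)}$; since the main term carries a factor $(N/(AB))^{1/(2r)}$, shrinking $AB$ this much re-inflates the main term and again degrades the exponent (one loses a factor of roughly $N^{1/(2r^2)}p^{-(r+1)/(8r^3)}$). This is exactly why the paper proves the theorem by induction on $N$: the error $E(ab)=\max_K|S_\chi(K,ab)|$ is bounded by the inductive hypothesis $c(ab)^{1-1/r}p^{(r+1)/(4r^2)}(\log p)^{1/r}$, which after averaging is only $O(k^{1-1/r})$ times the target bound and is absorbed by taking $k=AB/N$ small. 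The paper also restricts the range of $N$ from above via P\'olya--Vinogradov (and, for $r\ge3$, via the case $r-1$) and from below via the trivial bound before optimizing; some such reduction is needed to control the $\log(1.85A)$ and $AN/p$ terms. Your proposal needs both repairs --- the $V_2$ lemma and the inductive error treatment --- before any discussion of the constant $30$ is meaningful.
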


In Section \ref{burgess section 1} we improve Theorem B to
\begin{theorem} \label{burgess kiks 1}
Let $p$ be a prime. Let $\chi$ be a non-principal character $\bmod{\,p}$. Let $M$ and $N$ be non-negative integers with $N\ge 1$, let $2\le r\le 10$ be a positive integer, and let $p_0$ be a positive real number. Then for $p \ge p_0$, there exists $c_1(r)$, a constant depending on $r$ and $p_0$ such that
$$|S_{\chi}(M,N)| \le c_1(r) N^{1-\frac{1}{r}} p^{\frac{r+1}{4r^2}}(\log{p})^{\frac{1}{r}},$$
where $c_1(r)$ is given by Table \ref{super table burgess}.
\begin{table}[h]
\begin{center}
	\begin{tabular}{|c| c | c| c|}
\hline
	$r$ & $p_0  = 10^7$ & $p_0= 10^{10}$ & $p_0= 10^{20}$ \\ \hline
2 & 2.7381 & 2.5173 & 2.3549 \\  \hline

3 & 2.0197 & 1.7385 & 1.3695 \\  \hline

4 & 1.7308 & 1.5151 & 1.3104 \\  \hline

5 & 1.6107 & 1.4572 & 1.2987 \\  \hline

6 & 1.5482 & 1.4274 & 1.2901 \\  \hline

7 & 1.5052 & 1.4042 & 1.2813 \\  \hline

8 & 1.4703 & 1.3846 & 1.2729 \\  \hline

9 & 1.4411 & 1.3662 & 1.2641 \\  \hline

10 & 1.4160 & 1.3495 & 1.2562 \\  \hline
	\end{tabular}
\end{center}
\caption{Values for the constant $c_1(r)$ in the Burgess inequality.}\label{super table burgess}
\end{table}
\end{theorem}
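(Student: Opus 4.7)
The plan is the classical Burgess argument, with every constant tracked explicitly and a numerical parameter optimization carried out for each pair $(r, p_0)$. Fix $r \in \{2,\ldots,10\}$, assume $p \ge p_0$, and introduce positive integer parameters $A, B \ge 1$ to be chosen at the end (informally $B \asymp p^{1/(2r)}$, with $A$ a suitable power of $N$ and $p$).

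For each pair $(a,b) \in [1,A]\times[1,B]$, a comparison of intervals gives
\begin{equation*}
\Bigl| S_\chi(M,N) - \sum_{M < m \le M+N} \chi(m+ab)\Bigr| \le 2ab;
\end{equation*}
summing over all $AB$ pairs, then using $(a,p)=1$ to factor $\chi(m+ab) = \chi(a)\chi(\bar a m + b)$ (where $\bar a a \equiv 1 \pmod p$) and reindexing by $x \equiv \bar a m \pmod p$, produces
\begin{equation*}
AB \cdot S_\chi(M,N) = \sum_{x \bmod p} w(x)\, V(x) + O(A^2 B^2), \qquad V(x) := \sum_{b=1}^B \chi(x+b),
\end{equation*}
with $|w(x)| \le \nu(x) := \#\{(m,a) : M < m \le M+N,\ 1 \le a \le A,\ \bar a m \equiv x \pmod p\}$.

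Now apply H\"older's inequality with exponents $r/(r-1),\, 2r,\, 2r$:
\begin{equation*}
\sum_x \nu(x)|V(x)| \le \Bigl(\sum_x \nu(x)\Bigr)^{1-1/r}\Bigl(\sum_x \nu(x)^2\Bigr)^{1/(2r)}\Bigl(\sum_x |V(x)|^{2r}\Bigr)^{1/(2r)}.
\end{equation*}
The first factor is exactly $(NA)^{1-1/r}$. The second counts quadruples $(m_1,a_1,m_2,a_2)$ with $a_2 m_1 \equiv a_1 m_2 \pmod p$; via the standard reduction to coprime representations of the fraction $m/a$ together with a divisor-function estimate, this admits an explicit bound that is polylogarithmic in $p$ when $AN$ is not too large compared to $p$. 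The third factor is controlled directly by Theorem~\ref{main inequality Burgess}. Combining these ingredients with the shift error $O(AB)$, dividing through by $AB$, and optimizing $A, B$ yields a bound of the shape $c(r)\, N^{1-1/r} p^{(r+1)/(4r^2)}(\log p)^{1/r}$.

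The entries of Table~\ref{super table burgess} are obtained by performing this optimization numerically for each $(r, p_0)$: one searches over positive integer $A$ and $B$ (satisfying the mild hypothesis $r \le 9B$ of Theorem~\ref{main inequality Burgess}) that minimize the right-hand side of the combined estimate, divided by $N^{1-1/r} p^{(r+1)/(4r^2)}(\log p)^{1/r}$. The main obstacle is not any single step but the compounding of constants: the factorial $(2r-1)!!$, the absolute constant in the divisor estimate of $\sum_x \nu(x)^2$, the subordinate term $(2r-1) B^{2r}\sqrt p$ in Theorem~\ref{main inequality Burgess}, and the integer rounding of $A$ and $B$ all get raised to fractional powers like $1/(2r)$ and must be played off against the shift error $O(AB)$. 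It is this compounding that forces $c_1(r)$ to depend on the cut-off $p_0$: as $p_0$ grows, the secondary and rounding contributions become negligible, which is why the tabulated values decrease monotonically as one moves to the right in each row and as $r$ increases down each column.
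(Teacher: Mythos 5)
There is a genuine gap, and it sits at the heart of the argument: your treatment of the shift error. You bound $\bigl|S_\chi(M,N)-\sum_{M<m\le M+N}\chi(m+ab)\bigr|$ trivially by $2ab$, sum over the $AB$ pairs, and claim the resulting error is $O(AB)$ after dividing by $AB$. But the parameter balance forces $AB\asymp N$: with $B\asymp p^{1/(2r)}$, the H\"older main term is of order $A^{-1/(2r)}N^{1-1/(2r)}p^{1/(4r)}(\log)^{1/(2r)}$, and this only matches the target $N^{1-1/r}p^{(r+1)/(4r^2)}(\log p)^{1/r}$ when $A\asymp N/B$, i.e.\ $AB\asymp N$. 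An error of size $O(AB)=O(N)$ then swamps the main term entirely (the target is $\asymp N^{1-1/r}p^{(r+1)/(4r^2)}$, which is $o(N)$ throughout the relevant range $N\ge p^{1/4+1/(4r)}$); and shrinking $A$ to tame the error inflates the main term by the factor $A^{-1/(2r)}$, so no static choice of $A,B$ closes the argument. The paper's proof is by induction on $N$: the error $E(ab)=\max_K|S_\chi(K,ab)|$ is bounded by the inductive hypothesis $c_1(r)(ab)^{1-1/r}p^{(r+1)/(4r^2)}(\log p)^{1/r}$, which after averaging contributes $\frac{2r^2}{(2r-1)^2}k^{1-1/r}c_1(r)$ times the target (with $k=AB/N<1$) and is absorbed into the denominator $1-\frac{2r^2}{(2r-1)^2}k^{1-1/r}(\cdots)$ defining $c_1(r)$ in \eqref{c} and \eqref{c2}. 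This bootstrapping is not optional; it is what makes the Burgess method work at all.

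A second, smaller omission: you say $\sum_x\nu(x)^2$ is controlled "when $AN$ is not too large compared to $p$," but you give no mechanism for guaranteeing that, nor for bounding $\log(1.85A)$ by the right multiple of $\log p$. The paper does this by first disposing of large $N$ via P\'olya--Vinogradov (and, for $r\ge3$, via the already-established case $r-1$, which is what narrows the range enough to produce the tabulated constants), and of small $N$ via the trivial bound $|S_\chi(M,N)|\le N$; only inside the resulting window \eqref{range for N}--\eqref{range2 for N} does the numerical optimization over $k$ make sense. Without these reductions the quantity $AN/p$ in the $V_2$ bound need not be small and the claimed table values cannot be certified.
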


In the spirit of Theorem B, where we have no restriction on $r$, we also prove the following corollary:
\begin{corollary}\label{burgess corollary 1}
Let $p$ be a prime such that $p \ge 10^7$. Let $\chi$ be a non-principal character $\bmod{\,p}$. Let $r$ be a positive integer, and let $M$ and $N$ be non-negative integers with $N\ge 1$. Then
$$|S_{\chi}(M,N)| \le 2.74 N^{1-\frac{1}{r}} p^{\frac{r+1}{4r^2}}(\log{p})^{\frac{1}{r}}.$$
\end{corollary}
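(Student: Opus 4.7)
The plan is to split on $r$ into three cases. For $2 \le r \le 10$, Theorem~\ref{burgess kiks 1} applies directly; with $p_0 = 10^7$, the first column of Table~\ref{super table burgess} shows that every constant $c_1(r)$ in this range is bounded by $c_1(2) = 2.7381 < 2.74$, so the claimed estimate is immediate. This handles the heart of the range of interest.

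For $r = 1$, the claimed bound reduces to $|S_\chi(M,N)| \le 2.74\sqrt{p}\log p$, which is a very loose form of the P\'olya--Vinogradov inequality. Any standard explicit version (for instance, the classical estimate $|S_\chi(M,N)| \le \sqrt{p}\log p$) gives this with substantial slack.

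For $r \ge 11$, Theorem~\ref{burgess kiks 1} does not apply directly. Here I would combine two inputs: the trivial bound $|S_\chi(M,N)| \le N$, and the $r = 10$ instance of Theorem~\ref{burgess kiks 1}, namely $|S_\chi(M,N)| \le c_1(10)\, N^{9/10}\, p^{11/400}\,(\log p)^{1/10}$. The crossover between these two estimates occurs at $N^{\star} := c_1(10)^{10}\, p^{11/40}\,\log p$; below it the trivial bound is smaller, above it the Burgess bound is. In each subregime one must show the chosen estimate is at most $2.74\, N^{1-1/r} p^{(r+1)/(4r^2)} (\log p)^{1/r}$. A short calculation based on the identity
\[
\tfrac{10 r}{r-10}\Bigl[\tfrac{11}{400} - \tfrac{r+1}{4r^2}\Bigr] = \tfrac{r+1}{4r} + \tfrac{1}{40}
\]
shows that both subregimes collapse to the same uniform sufficient condition
\[
c_1(10)^{10/r}\, p^{(r-10)/(40 r^2)} \le 2.74.
\]

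The main obstacle is verifying this last inequality for all $r \ge 11$ and $p \ge 10^7$. As a function of $r$, the left-hand side has an interior maximum located at $r^{\star} = 10 + 10\sqrt{1 - \log c_1(10)/\log 2.74}$, which for the relevant values of $c_1(10)$ sits in roughly $[18,20)$. To keep the condition below $2.74$ across the full range of $p$, I would stratify on $p$ and plug in the progressively smaller tabulated values $c_1(10) = 1.4160,\ 1.3495,\ 1.2562$ on the intervals $[10^7,10^{10})$, $[10^{10},10^{20})$, $[10^{20},\infty)$ respectively; in each band the verification reduces to a single numerical check near $r = r^{\star}$.
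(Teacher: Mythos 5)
Your treatment of $2\le r\le 10$ (via Table \ref{super table burgess}) and of $r=1$ (via P\'olya--Vinogradov) is fine and agrees with the paper. The case $r\ge 11$, however, has a genuine gap. The uniform sufficient condition you derive,
\[
c_1(10)^{10/r}\,p^{(r-10)/(40r^2)}\le 2.74,
\]
cannot hold for all $p\ge 10^7$: for every fixed $r>10$ the exponent $(r-10)/(40r^2)$ is strictly positive, so the left-hand side tends to infinity with $p$. Stratifying $p$ into bands does not help, because the last band $[10^{20},\infty)$ is unbounded while the constant used there is frozen at $1.2562$. Concretely, at $r=20$ (where $(r-10)/(40r^2)=1/1600$ is largest) the condition reads $p^{1/1600}\le 2.74/(1.2562)^{1/2}\approx 2.44$, which fails once $p$ exceeds roughly $10^{621}$; even at $r=11$ it fails for $p$ beyond about $10^{1682}$. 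This is not slack in your estimates: at the crossover $N=N^{\star}=c_1(10)^{10}p^{11/40}\log p$ both of your input bounds equal $N^{\star}$, and the inequality $N^{\star}\le 2.74\,(N^{\star})^{1-1/r}p^{(r+1)/(4r^2)}(\log p)^{1/r}$ is \emph{exactly} equivalent to the displayed condition. So for such $p$ there genuinely exist $N$ (near $p^{11/40}$) for which neither the trivial bound nor the $r=10$ Burgess bound implies the claimed $r$-th bound. This reflects the general fact that the Burgess estimate with parameter $r$ is strongest precisely when $N\asymp p^{1/4+1/(4r)}$ and cannot be recovered there by interpolating between the trivial bound and a smaller value of $r$. (A lesser issue: your formula for the maximizing $r^{\star}$ is independent of $p$, whereas the true critical point solves $400r\log c_1(10)=(20-r)\log p$.)

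The paper closes this case differently: it proves the bound for each $r\ge 4$ directly by rerunning the entire induction of Theorem \ref{burgess kiks 1} with the constant frozen at $2.74$ (which permits $s=1$), checking $2.74^{r}\ge((2r-3)!!(r-1))^{1/r}$ to guarantee $\lfloor A\rfloor\ge 28$, replacing $B$ by its lower bound $15$, and verifying that the resulting expression \eqref{cmatad} with $k=11/64$ stays below $2.74$ by monotonicity in $r$ of its factors. To repair your argument you would need to carry out something of this sort for $r\ge 11$; the tabulated cases plus the trivial bound alone cannot cover the full range of $p$.
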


Restricting $N$ to be less than $4 p^{\frac{1}{2} + \frac{1}{4r}}$, McGown in \cite{McG3} proved an explicit version of Burgess with worse constants but with a better exponent in $\log{p}$. Indeed, he proved:
\begin{theoremac}\label{McGown's theorem burgess}
Let $p\ge 2\cdot 10^{4}$ be a prime number. Let $M$ and $N$ be non-negative integers with $1\le N\le 4 p^{\frac{1}{2} + \frac{1}{4r}}$. Suppose $\chi$ is a non-principal character $\bmod{\,p}$. Then there exists a computable constant $C(r)$ such that
\begin{equation*}
|S_{\chi}(M,N)| < C(r) N^{1-\frac{1}{r}} p^{\frac{r+1}{4r^2}}(\log{p})^{\frac{1}{2r}},
\end{equation*}
where $C(r)$ is given by Table \ref{table mcgown burgess}.
\begin{table}[h]
\begin{center}
	\begin{tabular}{|c| c | c| c|}
\hline
	$r$ & $C(r)$ & $r$ & $C(r)$ \\ \hline
	2 & 10.0366 & 9 & 2.1467 \\ \hline
	3 & 4.9539 & 10 & 2.0492 \\ \hline
	4 & 3.6493 & 11 & 1.9712 \\ \hline
	5 & 3.0356 & 12 & 1.9073 \\ \hline
	6 & 2.6765 & 13 & 1.8540 \\ \hline
	7 & 2.4400 & 14 & 1.8088 \\ \hline
	8 & 2.2721 & 15 & 1.7700\\ \hline
	
	\end{tabular}
\end{center}
\caption{Values for the constant $C(r)$ in the Burgess inequality.}\label{table mcgown burgess}
\end{table}
\end{theoremac}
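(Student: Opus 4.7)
My plan is to follow the classical Burgess scheme (shift, multiplicativity, H\"older, and Weil), and to exploit the hypothesis $N \le 4 p^{1/2 + 1/(4r)}$ at the step where one estimates the multiplicity sum; this is what produces the improved $(\log p)^{1/(2r)}$ exponent.

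I would first fix integer parameters $A, B$ to be optimised later. Using the shift $n \mapsto n + ab$ together with $\chi(n+ab) = \chi(a)\chi(a^{-1}n+b)$ for $(a,p)=1$, one obtains
\begin{equation*}
AB\,|S_\chi(M,N)| \le \sum_{m \bmod p} \nu(m)\,|W(m)| + O(A^2 B^2),
\end{equation*}
where $W(m) = \sum_{b=1}^{B} \chi(m+b)$ and $\nu(m) = \#\{(a,n) : 1 \le a \le A,\ M < n \le M+N,\ n \equiv am \pmod p\}$. H\"older's inequality with exponents $(r/(r-1),\,2r,\,2r)$ decomposes
\begin{equation*}
\sum_m \nu(m)\,|W(m)| \le (AN)^{1-\frac{1}{r}} \Bigl(\sum_m \nu(m)^2\Bigr)^{\frac{1}{2r}} \Bigl(\sum_m |W(m)|^{2r}\Bigr)^{\frac{1}{2r}},
\end{equation*}
and the third factor is controlled by Theorem A as $(2r-1)!!\,B^r p + (2r-1) B^{2r}\sqrt{p}$, the second summand being subdominant once $B$ is chosen of size roughly $p^{1/(2r)}$.

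The main obstacle is the estimate of $\sum_m \nu(m)^2$, which equals the number of quadruples $(a_1,a_2,n_1,n_2) \in [1,A]^2 \times (M,M+N]^2$ satisfying $a_2 n_1 \equiv a_1 n_2 \pmod p$. For unrestricted $N$, solutions of $a_2 n_1 - a_1 n_2 = kp$ with $k \ne 0$ must be counted via divisor-type averages, and this contributes an extra factor of $\log p$ that is responsible for the $(\log p)^{1/r}$ exponent in Theorem B. Under the present hypothesis one can arrange $A$ so that $AN < p$; then $|a_2 n_1 - a_1 n_2| < p$ forces $k = 0$, the congruence collapses to the equation $a_2 n_1 = a_1 n_2$, and parametrising solutions by $d = \gcd(a_1,a_2)$ (equivalently, by writing $a_1/a_2$ in lowest terms) yields the sharper bound $\sum_m \nu(m)^2 \ll AN \log(eAN)$. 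One logarithm fewer propagates through H\"older, producing $(\log p)^{1/(2r)}$ in the end.

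It then remains to choose $A$ and $B$ optimally---balancing the main term against the translation error $A^2 B^2$---and to push explicit constants through each inequality. The conceptual novelty is the congruence-to-equation collapse in the short-range regime; the rest is careful arithmetic, and producing the table is essentially a matter of running this optimisation for each $r$ from $2$ to $15$ under the hypothesis $p \ge 2 \cdot 10^4$.
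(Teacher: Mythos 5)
First, a point of orientation: the paper does not actually prove Theorem C --- it is quoted from McGown's paper --- but the paper's own Theorem \ref{burgess kiks 2} together with Lemma \ref{burgess V2 improved} establishes a sharpened version by essentially the route you sketch, so your outline can be judged against that. Your overall strategy (shift, H\"older with exponents $(r/(r-1),2r,2r)$, Theorem A for the complete sum $W$, and using the restriction on $N$ to remove one logarithm from $\sum_m \nu(m)^2$) is the right one. There is, however, a genuine error in the key step: the condition $AN<p$ (the paper in fact needs $2AN<p$, since the interval containing $k$ has length $(a_1+a_2)N/p\le 2AN/p$) does \emph{not} force $k=0$. The integers $n_1,n_2$ lie in $(M,M+N]$ with $M$ arbitrary, so $a_2n_1-a_1n_2$ can be enormous and the congruence does not collapse to the equation $a_2n_1=a_1n_2$. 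What the restriction buys is that $k$ is \emph{uniquely determined} by $(a_1,a_2)$; one must then still count, for fixed $(a_1,a_2,k)$, the pairs $(n_1,n_2)$ with $a_1n_2-a_2n_1=kp$ via the congruence argument of Lemma \ref{burgess counting lemma} (at most $N\gcd(a_1,a_2)/\max\{a_1,a_2\}+1$ of them), not by writing $a_1/a_2$ in lowest terms. The final bound $V_2\ll AN\log A$ survives, but your stated mechanism does not.

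The second, more serious, gap is the treatment of the shift error as $O(A^2B^2)$ to be ``balanced'' against the main term. With the trivial bound on the translated short sums, after dividing by $H\approx AB$ this error is $\approx AB/2=kN/2$ (writing $AB=kN$), and for $N$ near its maximum $4p^{1/2+1/(4r)}$ the requirement $kN\ll N^{1-1/r}p^{(r+1)/(4r^2)}(\log p)^{1/(2r)}$ forces $k\ll p^{-1/(4r)}(\log p)^{1/(2r)}$; the factor $k^{-1/(2r)}$ sitting in the main term then contributes an extra $p^{1/(8r^2)}$, which destroys the exponent $p^{(r+1)/(4r^2)}$. The fix --- used by Burgess, by McGown, and in the paper's proofs of Theorems \ref{burgess kiks 1} and \ref{burgess kiks 2} --- is induction on $N$: one assumes the claimed inequality for all lengths $h<N$ and applies it to each $E(ab)=\max_K|S_\chi(K,ab)|$, so that the error term becomes approximately $\frac{2r^2}{(2r-1)^2}k^{1-1/r}C(r)$ times the target, uniformly in $N$ and $p$; one then takes $k$ a fixed small constant and solves for $C(r)$. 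One also needs the complementary observation that the trivial bound $|S_\chi(M,N)|\le N$ disposes of $N\le C(r)^rp^{1/4+1/(4r)}(\log p)^{1/2}$, so that $A=kN/B$ can be made large enough for the $V_2$ lemma to apply. Without the inductive step your argument does not yield the stated form of the bound.
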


The restriction that $N \le 4p^{\frac{1}{2} + \frac{1}{4r}}$ is used to get the exponent $\frac{1}{2r}$ in the $\log{p}$ term of the inequality. In Section \ref{burgess section 2}, we improve McGown's Theorem to have better constants in a similar range.
\begin{theorem}\label{burgess kiks 2}
Let $p$ be a prime. Let $\chi$ be a non-principal character $\bmod{\,p}$. Let $M$ and $N$ be non-negative integers with $1\le N\le 2 p^{\frac{1}{2} + \frac{1}{4r}}$, let $r\le 10$ be a positive integer, and let $p_0$ be a positive real number. Then for $p \ge p_0$, there exists $c_2(r)$, a constant depending on $r$ and $p_0$ such that
\begin{equation*}
|S_{\chi}(M,N)| < c_2(r) N^{1-\frac{1}{r}} p^{\frac{r+1}{4r^2}}(\log{p})^{\frac{1}{2r}},
\end{equation*}
where $c_2(r)$ is given by Table \ref{table kiks burgess}.

\begin{table}
\begin{center}
	\begin{tabular}{|c| c | c| c|}
\hline
	$r$ & $p_0 = 10^{10}$ & $p_0 = 10^{15}$ & $p_0 = 10^{20}$ \\ \hline
2 & 3.6529 & 3.5851 & 3.5751 \\  \hline

3 & 2.5888 & 2.5144 & 2.4945 \\  \hline

4 & 2.1914 & 2.1258 & 2.1078 \\  \hline

5 & 1.9841 & 1.9231 & 1.9043 \\  \hline

6 & 1.8508 & 1.7959 & 1.7757 \\  \hline

7 & 1.7586 & 1.7066 & 1.6854 \\  \hline

8 & 1.6869 & 1.6384 & 1.6187 \\  \hline

9 & 1.6283 & 1.5857 & 1.5654 \\  \hline

10 & 1.5794 & 1.5410 & 1.5216 \\  \hline
\end{tabular}
\end{center}\caption{Values for the constant $c_2(r)$ in the Burgess inequality.}\label{table kiks burgess}
\end{table}

\end{theorem}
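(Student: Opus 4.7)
The plan is to follow the Burgess--McGown framework underlying Theorem C but to sharpen every explicit constant and re-optimise the auxiliary parameters for each $(r,p_0)$ cell of Table \ref{table kiks burgess}. Fix positive integers $A$ and $B$, to be chosen at the end. The usual shift identity
$$S_\chi(M,N)=\sum_{M<n\le M+N}\chi(n+ab)+2\theta_{a,b}\,ab,\qquad |\theta_{a,b}|\le 1,$$
holds for every $1\le a\le A$ and $1\le b\le B$, and averaging over the $AB$ pairs yields
$$|S_\chi(M,N)|\le\frac{1}{AB}\sum_{M<n\le M+N}\Bigl|\sum_{a,b}\chi(n+ab)\Bigr|+AB.$$

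The inner double sum is rewritten multiplicatively using $n+ab=a(a^{-1}n+b)$ and regrouped by the new variable $m\equiv a^{-1}n\pmod p$, converting the outer sum into a weighted expression $\sum_m V(m)\,|T(m)|$, where $T(m)=\sum_{b\le B}\chi(m+b)$ and $V(m)$ counts pairs $(a,n)$ with $a^{-1}n\equiv m\pmod p$. Applying H\"older's inequality with exponents $2r/(2r-1)$ and $2r$, combined with the elementary bounds $V(m)\le A$ and $\sum_m V(m)=AN$, controls this by
$$N^{1-1/(2r)}A\,\Bigl(\sum_{m\bmod p}|T(m)|^{2r}\Bigr)^{1/(2r)}.$$
Theorem A bounds the inner moment by $(2r-1)!!\,B^r p+(2r-1)B^{2r}\sqrt p$, and the hypothesis $p\ge p_0$ combined with $r\le 10$ renders the Weil secondary term genuinely negligible. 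Inserting this back into the shifted estimate yields an inequality of the form
$$|S_\chi(M,N)|\le \bigl(1+\eta_{p_0,r}\bigr)\,\frac{N^{1-1/(2r)}\bigl((2r-1)!!\bigr)^{1/(2r)}p^{1/(2r)}}{B^{1/2}}+AB,$$
where $\eta_{p_0,r}\to 0$ as $p_0\to\infty$.

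The final step is an explicit numerical optimisation. Balancing the two terms forces $B$ and $A$ into prescribed ranges, and the restriction $N\le 2p^{1/2+1/(4r)}$ is exactly what permits the optimal integer $B$ to produce a $(\log p)^{1/(2r)}$ factor in the final bound rather than the $(\log p)^{1/r}$ that appears in Corollary \ref{burgess corollary 1} for unrestricted $N$. Matching the desired shape yields
$$|S_\chi(M,N)|< c_2(r)\,N^{1-1/r}\,p^{(r+1)/(4r^2)}\,(\log p)^{1/(2r)},$$
with $c_2(r)$ assembled from $\bigl((2r-1)!!\bigr)^{1/(2r)}$, the H\"older constants, and the boundary contribution $AB$, all evaluated at the optimised integer values of $A$ and $B$.

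The principal obstacle is the numerical bookkeeping rather than the structural estimate: every multiplicative constant from the shift, from H\"older, from Theorem A, and from the error term $AB$ must be tracked through the $2r$-th root without any slack, and the resulting expression must then be minimised over admissible integer pairs $(A,B)$ at the boundary $p=p_0$. The integrality rounding losses, combined with the Weil secondary term, produce a $p_0$-dependent correction, which explains why the tabulated constants $c_2(r)$ shrink monotonically as $p_0$ grows from $10^{10}$ through $10^{15}$ to $10^{20}$. Implementing this optimisation cell by cell reproduces Table \ref{table kiks burgess}.
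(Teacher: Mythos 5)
Your overall architecture (shift, average over $ab$, regroup into $\sum_m V(m)|T(m)|$, H\"older, Theorem A, numerical optimisation of $A,B$) is the right family of argument, but two of your specific choices break the proof. First, your H\"older step is too weak. Using only $V(m)\le A$ and $\sum_m V(m)=AN$ gives $A\,N^{1-1/(2r)}\bigl(\sum_m|T(m)|^{2r}\bigr)^{1/(2r)}$, and after dividing by $AB$ the main term is of order $N^{1-1/(2r)}p^{1/(2r)}B^{-1/2}$; with $B\asymp p^{1/(2r)}$ this exceeds the target $N^{1-1/r}p^{(r+1)/(4r^2)}(\log p)^{1/(2r)}$ by roughly $\bigl(Np^{-1/(2r)}/\log p\bigr)^{1/(2r)}$, which is unbounded as $N$ approaches $2p^{1/2+1/(4r)}$. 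The proof requires the three-factor H\"older $V\le V_1^{1-1/r}V_2^{1/(2r)}W^{1/(2r)}$ together with a genuine second-moment estimate: the paper proves $V_2\le 2AN\log(1.85A)$ (Lemma \ref{burgess V2 improved}), and it is precisely the hypothesis $N\le 2p^{1/2+1/(4r)}$ --- via $2AN<p$, which forces uniqueness of $k$ in $a_1n_2-a_2n_1=kp$ --- that removes the $AN/p$ term from $V_2$ and hence yields the exponent $\tfrac{1}{2r}$ on $\log p$. You have misattributed the role of that hypothesis to the choice of an optimal integer $B$. A related slip: with $B\asymp p^{1/(2r)}$ both terms of Theorem A are of order $p^{3/2}$, so the Weil secondary term is not negligible; it must be balanced against the main term, which is how one arrives at $B=((2r-3)!!(r-1))^{1/r}p^{1/(2r)}$.

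Second, you bound the boundary error of the shifts trivially by $AB$. Since the optimisation forces $AB=kN$ for a fixed constant $k>0$, this error is $\asymp N$ and swamps the main term $N^{1-1/r}\cdots$ throughout most of the admissible range of $N$. The paper's proof is an induction on $N$: the error $E(ab)=\max_K|S_\chi(K,ab)|$ is bounded by the inductive hypothesis $c_2(r)(ab)^{1-1/r}p^{(r+1)/(4r^2)}(\log p)^{1/(2r)}$, so the averaged error becomes a fixed fraction (about $\tfrac{2r^2}{(2r-1)^2}k^{1-1/r}$) of $c_2(r)N^{1-1/r}\cdots$, and $c_2(r)$ is then obtained by solving the resulting self-referential inequality. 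The induction also supplies the lower bound on $N$ (from the trivial estimate and from the case $r-1$) that guarantees $\lfloor A\rfloor\ge 30$ and controls $\log(1.85A)$. Without these two ingredients the stated shape of the bound, let alone the numerical values in Table \ref{table kiks burgess}, cannot be recovered.
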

Using an idea from \cite{MV2007}, we can get rid of the restriction on $N$ for $r\ge 3$.

\begin{corollary}\label{burgess kiks corollary 2}
Let $p\ge 10^{10}$ be a prime number. Let $M$ and $N$ be non-negative integers with $N\ge 1$. Suppose $\chi$ is a non-principal character $\bmod{\,p}$ and that $p \ge p_0$ for some positive real $p_0$. Then for $r\ge 3$, there exists a computable constant $c_2(r)$ depending on $r$ and $p_0$, such that
\begin{equation*}
|S_{\chi}(M,N)| < c_2(r) N^{1-\frac{1}{r}} p^{\frac{r+1}{4r^2}}(\log{p})^{\frac{1}{2r}},
\end{equation*}
where $c_2(r)$ is the same as that of Table \ref{table kiks burgess} whenever $r\ge 3$.
\end{corollary}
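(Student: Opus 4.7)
\medskip
\noindent\textbf{Proof plan.} My goal is to extend Theorem~\ref{burgess kiks 2} from $N \le 2p^{1/2+1/(4r)}$ to all $N \ge 1$ while preserving the constants $c_2(r)$ of Table~\ref{table kiks burgess} for $r \ge 3$. The idea from \cite{MV2007} is that once $N$ leaves the Burgess range at parameter $r$, one may apply Theorem~\ref{burgess kiks 2} at a smaller parameter $r' < r$ (whose Burgess range extends further in $N$) or fall back on an explicit P\'olya--Vinogradov estimate.

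First I would use periodicity: since $\chi$ is non-principal mod $p$, $\sum_{n=M+1}^{M+p}\chi(n) = 0$, so $S_\chi(M,N) = S_\chi(M, N \bmod p)$, and the identity $S_\chi(M,N) = -S_\chi(M+N, p-N)$ reduces to $1 \le N \le p/2$. For $N \le 2p^{1/2+1/(4r)}$, apply Theorem~\ref{burgess kiks 2} directly. For $N > 2p^{1/2+1/(4r)}$, partition the remaining range via the thresholds $N_{r'} := 2p^{1/2+1/(4r')}$ for $r' = r-1, r-2, \ldots, 2$: on each sub-interval $(N_{r'+1}, N_{r'}]$ apply Theorem~\ref{burgess kiks 2} at parameter $r'$. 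On the tail $N > N_2 = 2p^{5/8}$, use an explicit P\'olya--Vinogradov inequality $|S_\chi(M,N)| \le \lambda \sqrt{p}\log p + \mu\sqrt{p}$ with $\lambda$ close to $2/\pi^2$.

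The main obstacle is the collection of numerical comparisons: on each sub-range the resulting bound must be shown to be at most $c_2(r) N^{1-1/r} p^{(r+1)/(4r^2)}(\log p)^{1/(2r)}$. Since the target grows in $N$ while each Burgess-at-$r'$ bound is essentially fixed on its sub-range, the tightest case is always the left endpoint. At the leftmost endpoint $N = 2p^{1/2+1/(4r)}$ (where $r' = r-1$), a short calculation reduces the comparison to
\[
\frac{c_2(r-1)}{c_2(r)} \cdot 2^{-1/(r(r-1))} \cdot (\log p)^{1/(2r(r-1))} \le p^{(r-2)/(4r(r-1)^2)},
\]
which holds for $r \ge 4$ and $p \ge 10^{10}$ thanks to the polynomial margin in $p$ and the values of Table~\ref{table kiks burgess}, but fails narrowly for $r = 3$. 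For $r = 3$, I would instead apply P\'olya--Vinogradov directly throughout $N > 2p^{7/12}$; there the comparison $c_2(3) \cdot 2^{2/3}(\log p)^{1/6} \ge \lambda \log p + \mu$ closes for $p \ge 10^{10}$ with $\lambda = 2/\pi^2$. The case $r = 2$ is excluded because no admissible $r' \in \{2,\ldots,r-1\}$ exists to feed the recursion, and direct P\'olya--Vinogradov is too weak to fit inside the target with the constant $c_2(2)$.
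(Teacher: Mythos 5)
Your overall strategy (descend to a smaller parameter $r'$ once $N$ leaves the range of Theorem \ref{burgess kiks 2} at level $r$, with P\'olya--Vinogradov as the final fallback) is genuinely different from the paper's, and it has a real gap at $r=3$, exactly the case you flag as delicate. Your patch for $r=3$ is to use P\'olya--Vinogradov for all $N>2p^{7/12}$. But at the left endpoint $N=2p^{7/12}$ the target equals $c_2(3)\,2^{2/3}\sqrt{p}\,(\log p)^{1/6}$ (since $\tfrac23\cdot\tfrac{7}{12}+\tfrac19=\tfrac12$), so your comparison $\lambda\log p+\mu\le c_2(3)2^{2/3}(\log p)^{1/6}$ pits $\log p$ against $(\log p)^{1/6}$ with no spare power of $p$. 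With $\lambda=2/\pi^2$ it holds at $p=10^{10}$ but fails once $\log p\gtrsim 37$, i.e.\ for $p\gtrsim 10^{16}$, so the corollary is not established for $r=3$ and large $p$. A secondary issue: you verify the endpoint comparison only for the first sub-interval ($r'=r-1$); the analogous comparisons at $N_{r'+1}$ for each $r'<r-1$, and at the P\'olya--Vinogradov threshold $N_2$, are individually tight and would all need to be checked for every $r$.

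The paper's route avoids this entirely. Instead of chaining Theorem \ref{burgess kiks 2} downward or using P\'olya--Vinogradov, it invokes Theorem \ref{burgess kiks 1} with $r=2$ --- the version with exponent $(\log p)^{1/r}$ but \emph{no} restriction on $N$ --- which gives $|S_\chi(M,N)|\le 2.6\,N^{1/2}p^{3/16}(\log p)^{1/2}$ for $p\ge 10^{10}$. Comparing this with the target shows the corollary holds automatically once $N\ge (2.6)^{\frac{2r}{r-1}}p^{\frac38-\frac{1}{8r}-\frac{3}{8r(r-1)}}\sqrt{\log p}$, and that threshold is verified to lie below $2p^{\frac12+\frac{1}{4r}}$ for all $r\ge3$ and $p\ge10^{10}$, so Theorem \ref{burgess kiks 2} covers the remaining range. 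The key point is that this fallback is beaten by a positive power of $p$, so the unfavorable logarithm ($(\log p)^{1/2}$ versus $(\log p)^{1/(2r)}$) is absorbed; your P\'olya--Vinogradov comparison at $r=3$ has no power of $p$ to absorb it. If you want to keep your structure, replace the $r=3$ fallback (and preferably all the fallbacks) by a single application of Theorem \ref{burgess kiks 1} with $r=2$.
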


Putting an extra restriction on $N$ (namely, $N \le 2\sqrt{p})$, Booker in \cite{Boo2006} gave better bounds in the special case of quadratic characters.

\begin{remark}
Using Theorem A one could extend Booker's theorem to all orders of $\chi$ (with slightly worse constants). The reason we would get slightly worse constants is that in the quadratic case, the inequality in Theorem A can be improved slightly to $(2r-1)!! B^{r} p + (2r-2)B^{2r}\sqrt{p}$. Every other part of Booker's proof extends naturally, but this part of the inequality fails when looking at higher orders.
\end{remark}

In Section \ref{section p^1/6}, we apply these estimates to a question about $k$-th power non-residues $\bmod{\,p}$. Indeed, let $p$ be a prime and let $k$ be an integer with $k \mid p-1$ and $k > 1$. Let $g(p,k)$ be the least $k$-th power non-residue $\bmod{\,p}$. The case $k = 2$, i.e., the question of how big the least quadratic non-residue is, has been studied extensively. A probabilistic heuristic using that a prime $q$ is a quadratic non-residue $\bmod{\,p}$ half of the time, suggests that $g(p,2) \ll \log{p}\log{\log{p}}$ and that $g(p,2) \gg \log{p}\log{\log{p}}$ for infinitely many $p$. Assuming the Generalized Riemann Hypothesis for Dirichlet L-functions (GRH), Ankeny \cite{Ank1952} showed that $g(p,k) \ll (\log{p})^2$ and Bach \cite{Bach1990} made this explicit by proving (under GRH) that $g(p,k) \le 2(\log{p})^2.$ The best unconditional results (for $g(p,k)$) are due to Burgess \cite{Bur1957}, who, building on work by Vinogradov \cite{Vinogradov}, showed that $$g(p,k) \ll_{\varepsilon} p^{\frac{1}{4\sqrt{e}}+\varepsilon}.$$

For $k \ge 3$, better estimates which depend upon $k$ have been proven by Wang Yuan(\cite{Yuan}) building on work of Vinogradov (\cite{Vinogradov}) and Buh{\v{s}}tab (\cite{Bu1949}).

All of the work described so far has been of asymptotic nature. In terms of getting explicit bounds, Karl Norton \cite{Nor1971}, building on a technique of Burgess \cite{Bur19632}, was able to show that $g(p,k) \leq 3.9 p^{1/4}\log p$ unless $k =2$ and $p\equiv 3 \pmod 4$ for which he showed  $g(p,k) \leq 4.7 p^{1/4}\log p$. In \cite{ETk}, the author improved Norton's bounds to $0.9 p^{1/4}\log{p}$ and $1.1 p^{1/4}\log{p}$, respectively. These bounds are far from the asymptotic bound of $p^{\frac{1}{4\sqrt{e}}+\varepsilon}$. In this paper, as an application of the explicit Burgess inequality, we find an upper bound on how large $p$ has to be to ensure that there is a $k$-th power non-residue less than $p^{1/6}$.

\begin{theorem}\label{theorem p^1.6}
Let $p$ be a prime number and $k >1$ be a positive divisor of $p-1$. Then for $p \ge 10^{4732}$, the least $k$-th power non-residue $\bmod{\,p}$ is less than or equal to $p^{1/6}$.
\end{theorem}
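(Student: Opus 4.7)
The plan is to argue by contradiction. Suppose that the least $k$-th power non-residue $g(p,k)$ exceeds $p^{1/6}$, and let $\chi$ be a non-principal Dirichlet character of order $k$ modulo $p$ (which exists because $k\mid p-1$). The assumption gives $\chi(n)=1$ for every $1\le n\le p^{1/6}$, and by complete multiplicativity $\chi\equiv 1$ on the set of $p^{1/6}$-smooth positive integers. Setting $y:=p^{1/6}$ and $N:=p^{\alpha}$ for a parameter $\alpha$ slightly larger than $1/6$, I estimate $S_\chi(0,N)=\sum_{n\le N}\chi(n)$ in two different ways.

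For the lower bound, when $u:=6\alpha$ is less than $2$ every $n\le N$ has at most one prime factor exceeding $y$, and multiplicativity gives
\begin{equation*}
S_\chi(0,N) \;=\; \Psi(N,y) \;+\; \sum_{y<q\le N}\chi(q)\,\Psi(N/q,y),
\end{equation*}
since $\chi$ is identically $1$ on $y$-smooth integers. An explicit Mertens estimate yields $\Psi(N,y)=\rho(u)N+O(N/\log p)$, where $\rho$ is the Dickman function; bounding the error sum trivially by $N-\Psi(N,y)$ gives
\begin{equation*}
|S_\chi(0,N)| \;\ge\; (2\rho(u)-1)\,N \;-\; O\!\left(\frac{N}{\log p}\right).
\end{equation*}

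For the upper bound I apply Corollary \ref{burgess kiks corollary 2} when $r\le 10$, or Corollary \ref{burgess corollary 1} when $r\ge 11$; either way
\begin{equation*}
|S_\chi(0,N)| \;\le\; c(r)\,N^{1-1/r}\,p^{(r+1)/(4r^{2})}\,(\log p)^{\eta(r)},
\end{equation*}
with $\eta(r)=1/(2r)$ in the first case and $\eta(r)=1/r$ in the second. Setting $N=p^{\alpha}$ and combining the two bounds reduces the desired contradiction to
\begin{equation*}
2\rho(6\alpha)-1 \;\le\; c(r)\,p^{-\alpha/r+(r+1)/(4r^{2})}\,(\log p)^{\eta(r)} + O\!\left(\frac{1}{\log p}\right).
\end{equation*}
I then choose $(r,\alpha)$ with $\alpha\in\bigl((r+1)/(4r),\,\sqrt{e}/6\bigr)$: the lower inequality forces the exponent of $p$ on the right to be strictly negative, and the upper one makes $\rho(6\alpha)>1/2$ so that the left-hand side is strictly positive. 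For $p$ sufficiently large the right-hand side is then smaller than the left, and an optimization over $r$, $\alpha$, and the explicit constants $c(r)$ produces the threshold $p_0=10^{4732}$.

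The main obstacle is the tightness of the admissible window. Since $(r+1)/(4r)<\sqrt{e}/6\approx 0.2747$ only for $r\ge 11$, and Corollary \ref{burgess kiks corollary 2}, whose constants $c_{2}(r)$ are considerably smaller than the universal $2.74$ of Corollary \ref{burgess corollary 1}, only tabulates $r\le 10$, one has to either refine the lower bound beyond $(2\rho(u)-1)N$ — for example by applying partial summation together with the Polya--Vinogradov bound to $\sum_{q\le z}\chi(q)$ to shrink the error sum well below its trivial size — or accept the weaker constant of Corollary \ref{burgess corollary 1} at $r=11$ or slightly larger. The surviving window for $\alpha$ is very thin, so both sides of the comparison are tiny; this is what forces $p_{0}$ to be as large as $10^{4732}$, and reaching exactly this threshold demands tight numerical bookkeeping of the Mertens error, of the behaviour of $\rho$ near $\sqrt{e}$, and of the explicit constants $c(r)$.
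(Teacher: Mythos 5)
Your proposal is correct and follows essentially the same route as the paper: assume $\chi\equiv 1$ up to $p^{1/6}$, get a lower bound for $\sum_{n\le x}\chi(n)$ by Vinogradov's trick (your $(2\rho(u)-1)N$ with $u<2$ is exactly the paper's $2\log(\delta\sqrt{e}+1)$ via the explicit Mertens estimate), and contradict it with the explicit Burgess bound of Corollary \ref{burgess corollary 1}, optimizing $r$ and the exponent of $p$ in the window you describe. The paper likewise concludes that the tabulated constants $c_2(r)$ are unavailable in the admissible range and settles for the universal constant $2.74$, taking $x=p^{1/4+1/(2r)}$ with $r=22$ and $\delta\approx 0.00458$ to reach the threshold $10^{4732}$.
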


\begin{remark}
The techniques involved in the proof can be used to answer this question for $p^{\alpha}$ whenever $\alpha > \frac{1}{4\sqrt{e}}$.
\end{remark}

\section{Preliminary lemmas}
Let $A$ and $N$ be positive integers. Let $v(x)$ be the number of representations of $x$ as $\bar{a}n \pmod{p}$, where $\bar{a}$ is the inverse of $a \pmod{p}$,  $1\le a\le A$ and $M < n \le M+N$, that is,
\begin{equation}\label{burgess v(x)}
v(x) = \# \left\{(a,n)\in\mathbb{N}^2\mid 1\le a\le A, \mbox{ } M < n\le M+N \mbox{ and } n\equiv ax \bmod{\,p}\right\}.
\end{equation}

The main lemma in this section is the following:
\begin{lemma}\label{burgess V2 lemma}
Let $p$ be a prime and let $N < p$ be a positive integer. Let $A\ge 28$ be an integer satisfying $A< \frac{N}{12}$, then
\begin{equation}\label{burgess V2 claim}
V_2 = \sum_{x\kern-3pt\mod{p}}v^2(x) \leq 2AN\left(\frac{AN}{p} + \log(1.85 A)\right).
\end{equation}
\end{lemma}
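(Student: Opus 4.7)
The plan is to unfold the square. Expanding
\[
V_2 = \sum_{x \bmod{\,p}} v(x)^2 = \sum_{x \bmod{\,p}} \sum_{(a_1,n_1),(a_2,n_2)} \mathbf{1}_{n_1 \equiv a_1 x \bmod{\,p}}\,\mathbf{1}_{n_2 \equiv a_2 x \bmod{\,p}}
\]
and switching the order of summation, the inner count of $x$ is $1$ exactly when $a_2 n_1 \equiv a_1 n_2 \bmod{\,p}$ (using $\gcd(a_i,p)=1$ since $a_i \le A < p$) and $0$ otherwise, so
\[
V_2 = \#\bigl\{(a_1,a_2,n_1,n_2) \in [1,A]^2 \times (M, M+N]^2 : a_2 n_1 \equiv a_1 n_2 \bmod{\,p}\bigr\}.
\]
On the diagonal $a_1 = a_2$, the congruence forces $n_1 \equiv n_2 \bmod{\,p}$, and since $N<p$ this gives $n_1 = n_2$, contributing exactly $AN$.

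For the off-diagonal I would apply the GCD decomposition $d = \gcd(a_1, a_2)$, $a_i = d b_i$ with $\gcd(b_1,b_2)=1$ and $(b_1,b_2)\neq(1,1)$. Since $\gcd(d,p)=1$, the congruence reduces to $b_2 n_1 \equiv b_1 n_2 \bmod{\,p}$, and its solution count $T(b_1,b_2)$ depends only on the coprime pair. To bound $T(b_1, b_2)$, I would write $b_2 n_1 - b_1 n_2 = kp$ and parameterize: for each $k \in \mathbb{Z}$ the integer solutions form the line $(n_1^{\circ} + t b_1,\, n_2^{\circ} + t b_2)$, and the box constraints $n_i \in (M, M+N]$ confine $t$ to an interval of length $\min(N/b_1, N/b_2) = N/\max(b_1, b_2)$, giving at most $N/\max(b_1, b_2) + 1$ integer $t$'s. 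Since $b_2 n_1 - b_1 n_2$ ranges over an interval of length at most $(b_1 + b_2)N$ on the box, the admissible $k$'s number at most $(b_1 + b_2)N/p + 1$. Combining,
\[
T(b_1, b_2) \le \left(\frac{(b_1+b_2)N}{p} + 1\right)\left(\frac{N}{\max(b_1, b_2)} + 1\right).
\]

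Finally I would sum over off-diagonal quadruples, fibered as $(d, b_1, b_2)$ with $d \le \lfloor A/\max(b_1, b_2)\rfloor$. Expanding the product bound on $T$ yields four sub-sums, which I would evaluate using: the counts $\#\{(b_1,b_2):\max = m,\gcd = 1\}=2\phi(m)$ and $\sum_{\max=m,\gcd=1}(b_1+b_2)=3m\phi(m)$ for $m\ge 2$; the Dirichlet identity $\sum_{m\le A}\phi(m)\lfloor A/m\rfloor = A(A+1)/2$; the crude bound $\sum_{m\le A}\phi(m) \le A^2/2$; and the estimate $\sum_{m\le A}\phi(m)/m^2 \le (6/\pi^2)\log A + 1$ (which can be verified at $A=28$ and extended). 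The leading sub-sum yields $\tfrac{3}{2}A^2 N^2/p$, a sub-sum of size $\asymp A^3 N/p$ is absorbed into $A^2 N^2/p$ via the hypothesis $A < N/12$, the $N/\max$ piece produces $(12/\pi^2)AN\log A + O(AN)$, and the constant piece $A^2 - A$ is bounded by $AN/12$ (again via $A < N/12$). The hypothesis $A\ge 28$ is then used to absorb all residual additive $O(AN)$ terms into the slack $2AN\log(1.85) \approx 1.23\,AN$ between the target $2AN\log(1.85A)$ and $2AN\log A$. The main obstacle will be this final constant accounting: verifying that $\tfrac{3}{2}<2$ on the leading term, $12/\pi^2 \approx 1.22 < 2$ on the $\log$-term, and that $A\ge 28$ forces all the additive errors below the available slack of about $1.23\,AN$, so that the bound collapses to the claimed $2AN\bigl(AN/p + \log(1.85A)\bigr)$.
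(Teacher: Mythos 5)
Your proposal is correct and follows essentially the same route as the paper: the same unfolding of $V_2$ into quadruples with diagonal contribution $AN$, the same two-factor bound (number of admissible $k$'s times the line-parameterization count of pairs per $k$, which is exactly the paper's Lemma \ref{burgess counting lemma}), and the same gcd-fibered Euler-$\phi$ sums — your $(d,b_1,b_2)$ parameterization just repackages the paper's $S_1$, $S_2$, $S_3$. One small caution on the final accounting: the additive errors total about $3AN + A^2$, which exceeds the $2AN\log(1.85)\approx 1.23\,AN$ slack alone, so you genuinely need the additional margin $(2-\tfrac{12}{\pi^2})AN\log A \gtrsim 2.6\,AN$ at $A\ge 28$ that you allude to — precisely the role of inequality \eqref{3} in the paper.
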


To prove the lemma regarding $V_2$ we will need a couple of estimates involving the $\phi$ function (Lemmas \ref{burgess claim 2} and \ref{burgess claim 1}), an estimate on a sum of logarithms (Lemma \ref{burgess claim 3}) and a non-trivial combinatorial count (Lemma \ref{burgess counting lemma}).

\begin{lemma}\label{burgess claim 2}
For $x\ge 1$ a real number we have:
\begin{equation}\label{burgess claim 2 eq}
\sum_{n\leq x}\frac{\phi(n)}{n} \leq \frac{6}{\pi^2} x + \log{x} + 1.
\end{equation}
\end{lemma}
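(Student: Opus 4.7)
The plan is to first reduce to the case where $x=n$ is a positive integer: the left-hand side depends only on $\lfloor x\rfloor$ while the right-hand side is non-decreasing in $x$, so it suffices to establish $\sum_{m\le n}\phi(m)/m\le 6n/\pi^2+\log n+1$ for all positive integers $n$. Then I would invoke the Dirichlet identity $\phi(m)/m=\sum_{d\mid m}\mu(d)/d$, swap the order of summation, substitute $\lfloor n/d\rfloor=n/d-\{n/d\}$, and complete the leading sum using $\sum_{d=1}^{\infty}\mu(d)/d^2=6/\pi^2$, obtaining the exact identity
$$\sum_{m\le n}\frac{\phi(m)}{m} = \frac{6n}{\pi^2} - n\sum_{d>n}\frac{\mu(d)}{d^2} - \sum_{d\le n}\frac{\mu(d)}{d}\left\{\frac{n}{d}\right\}.$$
The task then reduces to bounding these two error terms by $1$ and $\log n$, respectively.

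For the tail I would keep only the $\mu(d)=-1$ contributions (the $\mu(d)=+1$ ones have the favorable sign and can be dropped) and apply the integral comparison $\sum_{d>n}1/d^2\le\int_n^{\infty}t^{-2}\,dt = 1/n$, which yields $-n\sum_{d>n}\mu(d)/d^2\le 1$. The fractional-part sum requires slightly more care, and my idea is to exploit two cancellations simultaneously. First, the $d=1$ term contributes $-\{n\}=0$, precisely because $n$ is an integer (this is where the reduction to integer $x$ pays off). Second, for $d\ge 2$ the terms with $\mu(d)=+1$ contribute $-\{n/d\}/d\le 0$, so only squarefree $d\ge 2$ with an odd number of prime factors survive, and their total is controlled by $\sum_{d=2}^{n}1/d = H_n-1\le \log n$ using the standard bound $H_n\le 1+\log n$.

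I expect the only mildly delicate point to be this last bound: a naive estimate using $|\mu(d)|\le 1$ and $\{n/d\}<1$ term-by-term over all $d\le n$ would give $\log n+1$ in place of $\log n$, inflating the final additive constant from $1$ to $2$. Exploiting the sign of $\mu(d)$ together with the vanishing of the $d=1$ term is what makes the constant come out sharp. Once the two error bounds are combined with the main term, passing back to real $x\ge 1$ via $\lfloor x\rfloor\le x$ and $\log\lfloor x\rfloor\le\log x$ completes the proof.
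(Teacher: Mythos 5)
Your argument is correct, and it reaches the stated bound by a genuinely different route than the paper. The paper works directly with real $x$, splits $\lfloor x/d\rfloor\mu(d)/d$ using $\sum_{d\le x}1/d<\log x+\gamma+1/x$, and then must control the tail $x\sum_{d>x}\mu(d)/d^2$ via the Moser--Macleod estimate $\bigl|\sum_{d>x}\mu(d)/d^2\bigr|\le \tfrac{1}{3x}+\tfrac{8}{3x^2}$; the additive constant $1$ only emerges from the numerical combination $\gamma+\tfrac13+O(1/x)<1$, which forces a separate finite verification for $2\le x\le 41$. You instead reduce to integer $x=n$ (valid, since the left side is constant on $[n,n+1)$ and the right side is increasing), write the exact identity
$$\sum_{m\le n}\frac{\phi(m)}{m}=\frac{6n}{\pi^2}-n\sum_{d>n}\frac{\mu(d)}{d^2}-\sum_{d\le n}\frac{\mu(d)}{d}\left\{\frac{n}{d}\right\},$$
and bound each error term using only the sign of $\mu$: discarding the $\mu(d)=+1$ contributions in the tail gives $-n\sum_{d>n}\mu(d)/d^2\le n\sum_{d>n}d^{-2}\le 1$ by the integral test, and discarding the $\mu(d)=+1$ contributions together with the vanishing $d=1$ term (where $\{n\}=0$) gives $\sum_{2\le d\le n}1/d\le\log n$ for the fractional-part sum. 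This buys you a proof that is uniform in $n\ge 1$, needs no numerics and no external zeta-tail estimate, and makes transparent where the constant $1$ comes from; the paper's version is the more mechanical one but leans on a cited inequality and a case check. Both yield exactly $\frac{6}{\pi^2}x+\log x+1$, so your proof is a clean substitute.
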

\begin{proof} For $1\le x < 2$, the left hand side of \eqref{burgess claim 2 eq} is $1$, while the right hand side is at least $1$. We can manually check that for all integers $x$ satisfying $2\le x \le 41$ we have
$$\sum_{n\le x} \frac{\phi(n)}{n} \le \frac{6}{\pi^2}(x-1) + \log{(x-1)} + 1,$$
implying that \eqref{burgess claim 2 eq} is true for $x< 41$. Therefore, we may assume that $x \ge 41$.

Let's work with the sum:
\begin{equation}\label{kakaro}
\sum_{n\leq x}\frac{\phi(n)}{n} = \sum_{n \leq x}\frac{1}{n}\sum_{d | n}\mu(d)\frac{n}{d} = \sum_{d\leq x}\sum_{n\leq \frac{x}{d}}\frac{\mu(d)}{d} = \sum_{d\leq x}\left\lfloor\frac{x}{d}\right\rfloor\frac{\mu(d)}{d}.
\end{equation}

From \cite[Theorem $422$]{Hardy} it follows that for $x \ge 1$
\begin{equation}\label{Hardylin}
\sum_{d\le x}\frac{1}{d} < \log{x} + \gamma + \frac{1}{x}.
\end{equation}

Using \eqref{Hardylin} in \eqref{kakaro} yields
\begin{equation}\label{almost there 1}
\sum_{n\le x}\frac{\phi(n)}{n} \le x\sum_{d\ge 1} \frac{\mu(d)}{d^2} -x\sum_{d>x}\frac{\mu(d)}{d^2}+ \sum_{d\leq x} \frac{1}{d} \leq\frac{6}{\pi^2}x + \log{x} + \gamma+\frac{1}{x} - x\sum_{d>x}\frac{\mu(d)}{d^2}.
\end{equation}

Moser and Macleod \cite{MM1966} gave a simple proof that for $x\ge 2$ we have
\begin{equation}\label{moser}
\left|\sum_{d>x}\frac{\mu(d)}{d^2}\right|\le \frac{1}{3x} + \frac{8}{3x^2}.
\end{equation}

Combining \eqref{moser} with \eqref{almost there 1} yields for $x\ge 41$ that
$$\sum_{n\le x}\frac{\phi(n)}{n} \le \frac{6}{\pi^2}x + \log{x} + \gamma + \frac{1}{x}+\frac{1}{3} + \frac{8}{3x} \le \frac{6}{\pi^2}x + \log{x} + 1.$$
\end{proof}

\begin{lemma}\label{burgess claim 1}
For $x\ge 1$ a real number we have:
\begin{equation}\label{burgess claim 1 eq}
\sum_{n \leq x} n\phi(n) \leq \frac{2}{\pi^2}x^3 + \frac{1}{2}x^2\log{x} + x^2.
\end{equation}
\end{lemma}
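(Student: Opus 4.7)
The plan is to mimic the proof of Lemma \ref{burgess claim 2}, using Möbius inversion but now with an extra weight $n$. From $\phi(n)=\sum_{d\mid n}\mu(d)(n/d)$ and the substitution $n=dm$, we get
\begin{equation*}
\sum_{n\le x}n\phi(n)=\sum_{d\le x}d\,\mu(d)\,S_{2}(x/d),\qquad S_{2}(y):=\sum_{m\le y}m^{2}=\frac{\lfloor y\rfloor(\lfloor y\rfloor+1)(2\lfloor y\rfloor+1)}{6},
\end{equation*}
so the main term is $(x^{3}/3)\sum_{d\le x}\mu(d)/d^{2}\approx 2x^{3}/\pi^{2}$, which one controls via the same Moser--Macleod tail estimate \eqref{moser} already used in the previous lemma.

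The technical step is the sharp two-sided bound
\begin{equation*}
\frac{y^{3}}{3}-\frac{y^{2}}{2}\;\le\; S_{2}(y)\;\le\; \frac{y^{3}}{3}+\frac{y^{2}}{2}+\frac{y}{6},\qquad y\ge 0.
\end{equation*}
The upper bound is immediate from $\lfloor y\rfloor\le y$. For the lower bound, writing $y=n+f$ with $n=\lfloor y\rfloor$ and $f=\{y\}$, a direct expansion gives $S_{2}(y)-(y^{3}/3-y^{2}/2)=n^{2}(1-f)+n\bigl(\tfrac{1}{6}+f-f^{2}\bigr)+\bigl(\tfrac{f^{2}}{2}-\tfrac{f^{3}}{3}\bigr)$, each summand being non-negative for $n\ge 0$ and $f\in[0,1)$ (the case $y<1$ is trivial since then $y^{3}/3-y^{2}/2\le 0=S_{2}(y)$). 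I then split $\sum_{d\le x}d\mu(d)S_{2}(x/d)$ according to the sign of $\mu(d)$: the upper estimate applies on $\mu(d)=+1$, and the lower estimate on $\mu(d)=-1$ (the minus sign reversing its direction). This yields
\begin{equation*}
\sum_{n\le x}n\phi(n)\;\le\; \frac{x^{3}}{3}\sum_{d\le x}\frac{\mu(d)}{d^{2}}+\frac{x^{2}}{2}\sum_{d\le x}\frac{|\mu(d)|}{d}+\frac{x}{6}\,\#\{d\le x:\mu(d)=1\}.
\end{equation*}

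The rest is bookkeeping. Apply \eqref{moser} to the first sum, the elementary $\sum_{d\le x}|\mu(d)|/d\le\sum_{d\le x}1/d\le\log x+1$ to the second, and the trivial $\#\{\ldots\}\le x$ to the third. This gives
\begin{equation*}
\sum_{n\le x}n\phi(n)\;\le\; \frac{2x^{3}}{\pi^{2}}+\frac{x^{2}\log x}{2}+\frac{7x^{2}}{9}+\frac{8x}{9},
\end{equation*}
and $\tfrac{7x^{2}}{9}+\tfrac{8x}{9}\le x^{2}$ as soon as $x\ge 4$; the finite range $1\le x<4$ is dispatched by direct verification on $x\in\{1,2,3\}$ (using that both sides are step-constant/monotone between integers), just as in Lemma \ref{burgess claim 2}. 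The only delicate point, and the main obstacle, is obtaining the coefficient $\tfrac{1}{2}$ on $x^{2}\log x$: the naive bound $|S_{2}(y)-y^{3}/3|\le y^{2}$ would produce coefficient $1$, so one really needs the sharpened two-sided estimate above, whose crucial feature is that the \emph{same} coefficient $\pm y^{2}/2$ appears on both sides, allowing the two $x^{2}/(2d)$ contributions (from $\mu(d)=\pm 1$) to coalesce into a single $(x^{2}/2)\sum_{d\le x}|\mu(d)|/d$ term.
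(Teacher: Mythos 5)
Your proof is correct and follows essentially the same route as the paper: Möbius inversion, the exact formula for $\sum_{m\le y}m^2$, the Moser--Macleod tail bound \eqref{moser} for the main term, and the harmonic sum producing the $\tfrac12 x^2\log x$ term. Your two-sided bound on $S_2(y)$ combined with the split on the sign of $\mu(d)$ is just a repackaging of the paper's expansion in $\theta_d=\tfrac{x}{d}-\lfloor\tfrac{x}{d}\rfloor$ with the bound $|3-6\theta_d|\le 3$, and your slightly looser bookkeeping is absorbed by extending the finite check to $x<4$, which works.
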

\begin{proof}
For $1\le x<2$, the left hand side of \eqref{burgess claim 1 eq} is $1$, while the right hand side is at least $x^2 \ge 1$. Therefore it is true for $1\le x<2$. Now for $2\le x<3$, the left hand side is $3$, while the right hand side is at least $x^2 \ge 4$. Therefore \eqref{burgess claim 1 eq} is true for $1\le x < 3$. In the rest of the proof we will assume that $x\ge 3$.
 Let's work with the sum:
\begin{multline*}
\sum_{n\leq x} \phi(n)n = \sum_{n \leq x}\sum_{d | n} \frac{\mu(d)n^2}{d} = \sum_{d \leq x} \mu(d)d \sum_{dm \leq x}m^2\\ = \sum_{d\leq x}\frac{\mu(d)d}{6}\left\lfloor\frac{x}{d}\right\rfloor\left(\left\lfloor\frac{x}{d}\right\rfloor + 1\right)\left(2\left\lfloor\frac{x}{d}\right\rfloor + 1\right).
\end{multline*}

Now, let $\theta_d =\frac{x}{d}-\left\lfloor\frac{x}{d}\right\rfloor.$ Then we have  \begin{multline}\label{choco2}
\sum_{n\le x} \phi(n) n = \frac{x^3}{3}\sum_{d\leq x} \frac{\mu(d)}{d^2}  + \frac{x^2}{6}\sum_{d\le x}\frac{(3-6\theta_d )\mu(d)}{d} \\+ \frac{x}{6}\sum_{d\le x}\left(6\theta_d^2 - 6\theta_d + 1\right)\mu(d) - \frac{1}{6}\sum_{d\le x} \theta_d(1-\theta_d )(1-2\theta_d )\mu(d)d .
\end{multline}

From \eqref{Hardylin} it follows that for $x\ge 3$
\begin{equation}\label{hardy no mames}
\sum_{d\le x} \frac{1}{d} < \log{x} + \gamma + \frac{1}{x} < \log{x} + 1-\frac{1}{60}-\frac{1}{60x}.
\end{equation}

Using that $0\le\theta_d \leq 1$ we have that $|3-6\theta_d|\le 3$, that $|6\theta_d^2 - 6\theta_d+1| \le 1$ and $|(1-\theta_d)(1-2\theta_d)(-\theta_d)| \le \frac{1}{10}$. Therefore, using \eqref{choco2}, \eqref{hardy no mames}, that $\displaystyle\sum_{d\ge 1}\frac{\mu(d)}{d^2} = \frac{6}{\pi^2}$, and that $|\mu(d)| \leq 1$,  we get
\begin{multline}\label{burgess lemma 2 eq}
\sum_{n\le x} \phi(n) n \leq \frac{x^3}{3}\sum_{d\leq x} \frac{\mu(d)}{d^2}  + \frac{x^2}{2}\sum_{d\leq x}\frac{1}{d} + \frac{x}{6}\sum_{d\leq x} 1 +  \frac{1}{60}\sum_{d \leq x} d \\
\leq \frac{2}{\pi^2}x^3 - \frac{x^3}{3}\sum_{d>x}\frac{\mu(d)}{d^2} + \frac{1}{2}x^2\log{x} + \frac{x^2}{2} -\frac{x^2}{120} -\frac{x}{120}+\frac{x^2}{6}+ \frac{1}{60}\left(\frac{x(x+1)}{2}\right).
\end{multline}

From \eqref{moser} we have (for $x \ge 2$)
\begin{equation*}\label{burgess key to lemma 2}
\sum_{d> x}\frac{\mu(d)}{d^2} \ge \frac{1}{3x} - \frac{8}{3x^2} \ge -\frac{1}{x}.
\end{equation*}
Combining this with \eqref{burgess lemma 2 eq} yields the lemma.
\end{proof}

\begin{lemma}\label{burgess claim 3}
 For $x \geq 1$ we have:
$$\sum_{d\leq x} \log{\left(\frac{x}{d}\right)} \leq x -1$$
\end{lemma}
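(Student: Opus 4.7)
The plan is to compare the sum with an integral, exploiting the fact that $\log(x/t)$ is decreasing in $t$. Write $n = \lfloor x \rfloor$ so that the sum on the left is $\sum_{d=1}^{n}\log(x/d)$. Since $\log(x/t) \ge \log(x/d)$ for $t \in [d-1,d]$ with $d \ge 2$, I obtain the bound
\begin{equation*}
\log(x/d) \;\le\; \int_{d-1}^{d} \log(x/t)\,dt \qquad (d \ge 2).
\end{equation*}
Peeling off the $d=1$ term (which equals $\log x$) and summing from $d=2$ to $n$ gives
\begin{equation*}
\sum_{d \le x}\log(x/d) \;\le\; \log x + \int_{1}^{n} \log(x/t)\,dt.
\end{equation*}

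Next I would evaluate the integral using the antiderivative $\int \log(x/t)\,dt = t\log(x/t) + t$, obtaining
\begin{equation*}
\log x + \int_{1}^{n}\log(x/t)\,dt \;=\; \log x + n\log(x/n) + n - \log x - 1 \;=\; n\log(x/n) + n - 1.
\end{equation*}
The remaining task is to show that $n\log(x/n) + n - 1 \le x - 1$, equivalently $n\log(x/n) \le x - n$.

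This last inequality follows from the standard bound $\log u \le u - 1$, valid for all $u > 0$: setting $u = x/n \ge 1$ (since $n = \lfloor x \rfloor \le x$), one gets $n\log(u) \le n(u-1) = x - n$, which is exactly what is needed. A quick sanity check at $1 \le x < 2$ (where $n=1$ and the integral range is empty) reduces the claim directly to $\log x \le x - 1$, so the boundary case is handled by the same inequality. The only subtle point in the whole argument is the integrability issue at $t = 0$, which is precisely why the $d = 1$ term must be separated out before invoking the integral comparison.
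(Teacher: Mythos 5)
Your proof is correct, and it is essentially the same argument as the paper's: both rest on an integral comparison for the logarithmic sum followed by the elementary inequality $\log u \le u-1$ (the paper writes it as $\log(1+y)\le y$). The only difference is bookkeeping — you compare $\sum_{d}\log(x/d)$ directly with $\int\log(x/t)\,dt$, while the paper first expands the sum as $\lfloor x\rfloor\log x-\sum_{d\le x}\log d$ and bounds $\sum_{d\le x}\log d$ below by $\int_1^{\lfloor x\rfloor}\log t\,dt$.
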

\begin{proof} For $1\leq x <2$ we have $\displaystyle\sum_{d\leq x} \log{\left(\frac{x}{d}\right)} = \log{x} \leq x -1$. Therefore, we may assume $x \geq 2$. Now,
\begin{equation}\label{logsum}
\sum_{d\leq x} \log{\left(\frac{x}{d}\right)} = \left\lfloor x\right\rfloor \log{x} - \sum_{d\leq x} \log{d} \leq \left\lfloor x\right\rfloor \log{x} - \left\lfloor x\right\rfloor \log{\lfloor x\rfloor}+\lfloor x\rfloor -1.
\end{equation}
To get the second inequality we used that $$\sum_{d\leq x} \log{d} = \sum_{2\le d\le x}\log{d} \geq \int_{1}^{\lfloor x\rfloor}\log{t}\,dt = \lfloor x \rfloor\log{\lfloor x\rfloor} -\lfloor x\rfloor + 1.$$

Now, notice that $x = \lfloor x \rfloor + \{x\}$ and $\log{(1+y)}\leq y$. Therefore we have
\begin{equation}\label{logito}
\lfloor x\rfloor \log{x} = \lfloor x\rfloor\log{\lfloor x\rfloor} + \lfloor x\rfloor\log{(x/\lfloor x\rfloor)}\leq \lfloor x\rfloor\log{\lfloor x\rfloor} + \{x\}.
\end{equation}

Combining equations (\ref{logsum}) and (\ref{logito}) yields

\begin{equation*}
\sum_{d\leq x} \log{\left(\frac{x}{d}\right)}\leq \{x\} + \lfloor x\rfloor - 1 = x-1.
\end{equation*}
\end{proof}

\begin{lemma}\label{burgess counting lemma}
Let $A \ge 2$, $N \ge 1$, $a_1$, $a_2$ and $M$ be integers. Let $p > N$ be a prime number. Suppose, $1\le a_1,a_2 \le A$ with $a_1 \neq a_2$. Then the number of pairs of integers $(n_1,n_2)$ satisfying $M < n_1,n_2 \le N+M$ and $a_1 n_2 - a_2 n_1 = kp$ is bounded above by
$$N \frac{\gcd{(a_1,a_2)}}{\max\{a_1,a_2\}} + 1.$$
\end{lemma}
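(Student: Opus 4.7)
The plan is to recognize $a_1 n_2 - a_2 n_1 = kp$ as a single linear Diophantine equation in the two unknowns $n_1, n_2$ (with $k$ a fixed integer), and to count the integer solutions that fall inside the square $(M, M+N]^2$. By symmetry I would assume throughout that $a_2 = \max\{a_1,a_2\}$, and introduce $d = \gcd(a_1,a_2)$ together with $b_1 = a_1/d$, $b_2 = a_2/d$; then $\gcd(b_1,b_2) = 1$ and the target bound becomes $N/b_2 + 1$.

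If the equation has no integer solutions at all, the statement is vacuous, so suppose one solution $(\alpha,\beta)$ exists. Subtracting $a_1 \beta - a_2 \alpha = kp$ from $a_1 n_2 - a_2 n_1 = kp$ gives $b_1(n_2 - \beta) = b_2(n_1 - \alpha)$, and since $\gcd(b_1,b_2) = 1$ there must exist a single integer $t$ with $n_1 = \alpha + b_1 t$ and $n_2 = \beta + b_2 t$. This is the standard parametrization of the integer points on the line, and it reduces the problem to counting admissible values of the single parameter $t$.

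The final step is to impose the box constraints. The requirement $M < \beta + b_2 t \le M+N$ confines $t$ to a half-open interval of length $N/b_2$, and such an interval contains at most $\lfloor N/b_2\rfloor + 1 \le N/b_2 + 1$ integers. Rewriting $N/b_2 = Nd/a_2 = N\gcd(a_1,a_2)/\max\{a_1,a_2\}$ yields the claimed bound. The parallel condition on $n_1$ restricts $t$ to an interval of length $N/b_1 \ge N/b_2$, so it can only sharpen the count and need not be invoked.

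I do not expect any real obstacle. The crux is the Diophantine parametrization, which is elementary once $\gcd(b_1,b_2) = 1$ is noted; the rest is a one-dimensional interval count. The only detail worth flagging is that the tighter factor $1/b_2$ (rather than $1/b_1$) arises from projecting onto the coordinate with the larger step size, which is exactly the reason for fixing $a_2$ to be the maximum of $a_1$ and $a_2$ at the outset.
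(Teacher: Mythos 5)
Your proof is correct and is essentially the same argument as the paper's: the paper fixes $n_1$ to a single residue class modulo $a_1/\gcd(a_1,a_2)$ inside $(M,M+N]$ and notes $n_2$ is then determined, while you express the identical count via the standard parametrization $n_1=\alpha+b_1t$, $n_2=\beta+b_2t$ and project onto the coordinate with the larger step. Reading the bound off that coordinate is exactly the paper's step of taking the better of the two symmetric bounds, so there is no gap.
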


\begin{proof}
Let $d = \gcd{(a_1,a_2)}$. Since $a_1n_2 -a_2n_1 = kp$, we have that $d | k$. Let $a_1=a_1^{'}d, a_2 = a_2^{'}d$ and $k = k^{'} d$. Now, we also have
\begin{equation}\label{goku}
n_2 = \frac{k p+a_2n_1}{a_1} = \frac{k^{'} p + a_2^{'}n_1}{a_1^{'}}.
\end{equation}
The right hand side of \eqref{goku} must be an integer. Therefore $k^{'}p + a_2^{'}n_1 \equiv 0 \bmod{a_1^{'}}$. Since this is a linear equation in terms of $n_1$, there is at most one solution $\bmod{\,a_1^{'}}$. Therefore, in the interval $(M, M+N]$ there are at most $\displaystyle\frac{N}{a_1^{'}} + 1$ choices of $n_1$. Since $n_2$ is uniquely determined from $n_1$, the number of pairs $(n_1, n_2)$ satisfying the conditions of the lemma is bounded by
$$\frac{N}{a_1^{'}} + 1 = N\frac{\gcd{(a_1,a_2)}}{a_1} + 1.$$
Analogously, the number of pairs is bounded by $N\displaystyle\frac{\gcd{(a_1,a_2)}}{a_2} + 1$. The statement of the lemma is now an easy consequence.
\end{proof}

Now we are ready to prove Lemma \ref{burgess V2 lemma}.

\begin{proof}[Proof of Lemma \ref{burgess V2 lemma}]
We'll begin by noting that $V_2$ is the number of quadruples $(a_1,a_2,n_1,n_2)$ with $1\leq a_1,a_2 \leq A$ and $M < n_1,n_2 \leq M + N$ such that $a_1n_2 \equiv a_2n_1\pmod p$. If $a_1 = a_2$, since $N < p$, we have that $n_1 = n_2$ because $n_1 \equiv n_2 \pmod p$ while $|n_1 - n_2| \leq N < p$. Therefore, the number of quadruples in this case is $AN$. Fix $a_1$ and $a_2$ with $a_1 \neq a_2$. Let $k$ be an integer satisfying
\begin{equation}\label{burgess the equation}
a_1n_2 - a_2n_1 = kp,
\end{equation}
for some $n_1$ and $n_2$. We can put a bound on possible values for $k$. First of all, $k$ must be a multiple of $\gcd{(a_1,a_2)}$. Now, if we write $n_1 = n_1^{'} + M$ and $n_2 = n_2^{'} + M$, we have, using $kp - (a_1-a_2)M = a_1n_2^{'} - a_2n_1^{'}$, that
$$-a_2 N \le - a_2n_1^{'} < kp - (a_1-a_2)M < a_1n_2^{'} \le a_1 N  .$$
Therefore $k$ lies in an interval of length at most $\frac{(a_1 + a_2) N}{p}$. Since $k$ is a multiple of $\gcd{(a_1,a_2)}$ and $k$ lies in such an interval, then there are at most
$$\frac{(a_1 + a_2) N}{\gcd{(a_1,a_2)} p} + 1,$$
choices for $k$.

Given $a_1,a_2$ and $k$ we can count the number of pairs $(n_1,n_2)$ which would satisfy (\ref{burgess the equation}). By Lemma \ref{burgess counting lemma}, the number of pairs is bounded by $N \frac{\gcd{(a_1,a_2)}}{\max\{a_1,a_2\}} + 1$. Therefore we get
\begin{multline}\label{bubu}
V_2 \leq AN + 2\sum_{a_1 < a_2} \Big( \frac{(a_1 + a_2) N}{\gcd{(a_1,a_2)}p} + 1   \Big) \Big( \frac{\gcd{(a_1,a_2)} N}{\max\{a_1,a_2\}} + 1  \Big) \\ = AN + \frac{2N^2}{p} S_1 + \frac{2N}{p} S_2  + 2N S_3 + A^2 - A,
\end{multline}
where
\begin{equation*}\label{S_1}
S_1 = \sum_{a_1 < a_2} \frac{a_1 + a_2}{\max\{a_1,a_2\}} \mbox{ , }
\end{equation*}
\begin{equation*}\label{S_2}
S_2 = \sum_{a_1 < a_2} \frac{a_1 + a_2}{\gcd{(a_1,a_2)}} \mbox{ , }
\end{equation*}
and
\begin{equation}\label{S_3}
S_3 = \sum_{a_1 < a_2} \frac{\gcd{(a_1,a_2)}}{\max\{a_1,a_2\}}.
\end{equation}

Dealing with $S_1$ is straightforward, in fact $S_1$ is
\begin{equation}\label{burgess S1}
\sum_{a_2 \leq A}\sum_{a_1 < a_2} \frac{a_1 + a_2}{a_2} = \sum_{a_2 \leq A} \left(a_2 -1 + \frac{a_2(a_2-1)}{2a_2}\right) = \frac{3}{2}\frac{A(A-1)}{2} = \frac{3}{4}A^2- \frac{3}{4}A.
\end{equation}

Now, let's estimate $S_2$:
\begin{multline*}
S_2 = \sum_{a_1< a_2 \leq A} \frac{a_1+a_2}{\gcd{(a_1,a_2)}} = \sum_{d \leq A}\sum_{b_2 \leq \frac{A}{d}}\sum_{b_1 < b_2, (b_1,b_2) = 1} \left(b_1 + b_2\right)\\ = \sum_{d\leq A}\sum_{2\leq b_2 \leq \frac{A}{d}} \left(\frac{\phi(b_2)}{2}b_2+ \phi(b_2)b_2 \right) = \frac{3}{2}\sum_{d\leq A}\sum_{2\leq b_2 \leq \frac{A}{d}} \phi(b_2)b_2.
\end{multline*}
Using Lemma \ref{burgess claim 1}, we get
\begin{equation*}
S_2 \leq \frac{3}{\pi^2}\sum_{d\leq A}\big(\frac{A}{d}\big)^3 + \frac{3}{4}\sum_{d\leq A} \big(\frac{A}{d}\big)^2\log{(\frac{A}{d})} + \frac{3}{2}\sum_{d\leq A} \big(\frac{A}{d}\big)^2.
\end{equation*}
Using that $\log{(\frac{A}{d})} = \log{A}-\log{d}$,  and that $\displaystyle\sum_{d\ge 1} \frac{1}{d^s} = \zeta(s)$, yields
\begin{equation*}
S_2 \le \frac{3\zeta(3)}{\pi^2}A^3 + \frac{3\zeta(2)}{4}A^2\log{A} - \frac{3}{4}A^2\sum_{d\le A}\frac{\log{d}}{d^2} + \frac{3}{2}A^2\zeta(2).
\end{equation*}
Using that for $A\ge 11$ we have $\frac{3\zeta(2)}{2} - \frac{3}{4}\sum_{d\le A}\frac{\log{d}}{d^2} < 2$ yields
\begin{equation}\label{burgess S2}
S_2 \leq \frac{3\zeta(3)}{\pi^2}A^3 + \frac{3\zeta(2)}{4}A^2\log{(A)} + 2 A^2.
\end{equation}

Let's estimate $S_3$. We have
$$S_3 = \sum_{a_1< a_2 \leq A} \frac{\gcd{(a_1,a_2)}}{\max(a_1,a_2)} = \sum_{d \leq A}\sum_{b_2 \leq \frac{A}{d}}\sum_{b_1 < b_2, (b_1,b_2) = 1} \frac{1}{b_2} = \sum_{d \leq A} \sum_{2\leq b_2 \leq \frac{A}{d}} \frac{\phi(b_2)}{b_2}.$$
Using Lemma \ref{burgess claim 2} yields
$$S_3 \leq \sum_{d\leq A} \left(\frac{A}{d}\frac{1}{\zeta(2)} + \log{(\frac{A}{d})}\right) = \frac{6}{\pi^2} A\sum_{d\le A}\frac{1}{d} + \sum_{d\le A} \log{(\frac{A}{d})}. $$

From \eqref{Hardylin} it follows that for $A\ge 27$
\begin{equation*}
\sum_{d\le A} \frac{1}{d} < \log{A} + \gamma + \frac{1}{A} < \log{(1.85 A)}.
\end{equation*}
Using this and Lemma \ref{burgess claim 3} yields
\begin{equation}\label{burgess S3}
S_3 \le \frac{6}{\pi^2} A\log{(1.85A)} + A -1.
\end{equation}

 Using (\ref{burgess S1}), (\ref{burgess S2}) and (\ref{burgess S3}) in (\ref{bubu}) yields the following upper bound for $V_2$:
\begin{equation}\label{*}
2AN\Big(\frac{3}{2} + \frac{A-1}{2N} + \frac{3AN}{4p} - \frac{3N}{4p} + \frac{3\zeta(3)A^2}{\pi^2 p} + \frac{3\zeta(2)A\log{A}}{4p} + \frac{6}{\pi^2}\log{(1.85A)} -\frac{1}{A} + \frac{2A}{p}\Big)
\end{equation}

For $A\ge 4$, we have
\begin{equation}\label{burgess *1}
\frac{3\zeta(3)A^2}{\pi^2 p} + \frac{3\zeta(2)A\log{A}}{4p} < \frac{3}{4}\frac{A^2}{p}.
\end{equation}
Since $N > 3A$ we have the following two inequalities:
\begin{equation}\label{1}\frac{AN}{4p} > \frac{3}{4}\frac{A^2}{p} \mbox{ and } \frac{3N}{4p} > \frac{2A}{p}.
\end{equation}

Combining \eqref{burgess *1} and \eqref{1} yields
\begin{equation}\label{burgess *2}
\frac{3AN}{4p} +\left(\frac{3\zeta(3)A^2}{\pi^2 p} + \frac{3\zeta(2) A\log{A}}{4p}\right)+\left(\frac{2A}{p} - \frac{3N}{4p}\right) < \frac{AN}{p}.
\end{equation}
Finally, using that $A\ge 28$ and that $N > 12A$, yields
\begin{equation}\label{3}\Big(1-\frac{6}{\pi^2}\Big)\log{(1.85 A)} \geq \Big(1-\frac{6}{\pi^2}\Big)\log{(51.8)} \geq 1.54766 > \frac{3}{2} + \frac{1}{24}  \geq \frac{3}{2} + \frac{A}{2N} .\end{equation}

Combining \eqref{burgess *2} and \eqref{3} in \eqref{*} yields \eqref{burgess V2 claim}.
\end{proof}

\begin{remark} The main term will come from the $\log{(1.85A)}$ term and the $1.85$ can be changed to a smaller number (the limit being $e^{\gamma}$), forcing $A$ to be slightly larger to make the inequalities work. Also, the coefficient on $\log{(1.85A)}$ can be changed to be as close to $\frac{6}{\pi^2}$ as we want as long as $A$ is big enough. It is important to note that big $A$'s will mean forcing $p$ to be much bigger in the estimates for the Burgess inequality.
\end{remark}

\begin{remark}
The constraint $A\ge 28$ is used to get the main term to be $\log{(1.85 A)}$; however, we can relax the condition on $A$ and get a slightly worse main term. We chose our values this way to get the constants in Tables \ref{super table burgess} and \ref{table kiks burgess} as low as possible for small values of $r$. Relaxing the $A\ge 28$ condition would make these constants worse, but improve the constants for larger values of $r$. Since the small values of $r$ seem to be the most useful in applications, we decided to focus on minimizing these cases.
\end{remark}

\section{Explicit Burgess inequality}\label{burgess section 1}
\begin{proof}[Proof of Theorem \ref{burgess kiks 1}]
Let $M$ and $N\ge 1$ be non-negative integers. Let $r \ge 2$ be a positive integer. Fix a constant $c_1(r) \ge 1$ (which we will name later). We will prove the Theorem by induction. Assume that for all positive integers $h< N$, we have
$$|S_{\chi}(M,h)|\le c_1(r) h^{1-\frac{1}{r}} p^{\frac{r+1}{4r^2}}(\log{p})^{\frac{1}{r}}.$$
The idea is to estimate $S_{\chi}(M,N)$ by shifting by $h$ ($n \mapsto n + h$) and getting an error that we can deal with by induction.

Note that, for all $h < N$,
\begin{equation*}
S_{\chi}(M,N) = \sum_{M < n \leq M+N} \chi(n + h) + \sum_{M<n\leq M+h} \chi(n) - \sum_{M+N < n \leq M+N + h} \chi(n).
\end{equation*}
Therefore
\begin{equation*}
S_{\chi}(M,N)  = \sum_{M < n \leq N+M} \chi(n + h)+ 2\theta E(h),
\end{equation*}
where $|\theta|\le 1$ which depends upon $h$, and $E(h) = \displaystyle\max_{K}|S_{\chi}(K,h)|$.

Let $A$ and $B$ be positive reals and let $H = \lfloor A\rfloor\lfloor B\rfloor$. We will use shifts of length $h = ab$ where $a$ and $b$ are positive integers satisfying $a \leq A$ and $b \leq B$. After averaging over all the pairs $(a,b)$ we get
\begin{equation}\label{bur eq 1}
S_{\chi}(M,N) = \frac{1}{H}\sum_{a,b} \sum_{M < n \leq M + N} \chi(n + ab) + \frac{1}{H}\sum_{a,b}2\theta E(ab).
\end{equation}
Let $v(x)$ be defined as in (\ref{burgess v(x)}), then
\begin{equation}\label{bur eq 2}
\left|\sum_{a,b}\sum_{M < n \leq M + N} \chi(n + ab)\right| \le \sum_{x\kern-3pt\mod{p}} v(x)\left|\sum_{b\le B} \chi(x+b)\right|.
\end{equation}
Let $$V:= \sum_{x\kern-3pt\mod{p}} v(x)\left|\sum_{b\le B}\chi(x+b)\right|.$$
Then, combining (\ref{bur eq 1}) with (\ref{bur eq 2}), we get
\begin{equation}\label{burgess ineq V/H + 2/H}
|S_{\chi}(M,N)| \leq \frac{V}{H} + \frac{2}{H}\sum_{a,b} E(ab).
\end{equation}

We can now focus on estimating $V$. Now define $V_1 := \displaystyle \sum_{x \pmod p} v(x)$,\\ $V_2 := \displaystyle \sum_{x \pmod p} v^2(x)$ and $W := \displaystyle \sum_{x \pmod p} \Big|\displaystyle \sum_{1 \leq b \leq B} \chi(x + b)\Big|^{2r}$. Using H\"older's Inequality we get
\begin{equation}\label{burgess V}
V \leq V_1^{1- \frac{1}{r}}V_2^{\frac{1}{2r}}W^{\frac{1}{2r}}.
\end{equation}

First note that
\begin{equation*}\label{burgess V1}
V_1 = \lfloor A\rfloor N\le AN.
\end{equation*}
From Lemma \ref{burgess V2 lemma}, for $\lfloor A\rfloor\ge 28$ and $\lfloor A\rfloor<\frac{N}{12}$, we have
\begin{equation}\label{burgess V2}
V_2  \leq 2AN\left(\frac{AN}{p} + \log(1.85 A)\right).
\end{equation}
We can also bound $W$, since by Theorem A, we have (for $r\le 9B$):
\begin{equation}\label{burgess W}
W \le \frac{(2r)!}{2^r r!}B^r p + (2r-1)B^{2r}\sqrt{p} = (2r-1)!! B^r p + (2r-1) B^{2r}\sqrt{p}.
\end{equation}

Let's head back to proving the Burgess bound. We will let $AB = kN$ for $k$ a real number to be chosen later. Using the inequalities of $V_1,V_2$ and $W$ together with \eqref{burgess V} yields the following bound upper bound for $\frac{V}{H}$:
\begin{multline}\label{Opti}
\frac{V}{H} \le \frac{1}{\lfloor A\rfloor \lfloor B\rfloor}V_1^{1-\frac{1}{r}}V_2^{\frac{1}{2r}}W^{\frac{1}{2r}} \leq \frac{\frac{AB}{\lfloor A\rfloor\lfloor B\rfloor} }{(AB)^{\frac{1}{2r}}}\cdot\frac{(2WB)^{\frac{1}{2r}}}{B}\left(\frac{AN}{p} + \log{(1.85A)}\right)^{\frac{1}{2r}}N^{1-\frac{1}{2r}}
\\\le \frac{A}{A-1}\cdot\frac{B}{B-1}\cdot\frac{1}{k^{\frac{1}{2r}}}\cdot\frac{(2WB)^{\frac{1}{2r}}}{B}\left(\frac{AN}{p} + \log{(1.85A)}\right)^{\frac{1}{2r}}N^{1-\frac{1}{r}}.
\end{multline}

Because of \eqref{Opti} we can see that a good choice for $B$ is the one that minimizes $\frac{WB}{B^{2r}}$. Using \eqref{burgess W}, we seek to minimize the expression  $(2r-1)!!\frac{p}{B^{r-1}} + (2r-1)Bp^{\frac{1}{2}}$. We take the derivative with respect to $B$ and equal it to zero. After this process we get that a good $B$ is
\begin{equation}\label{burgess B}
B = \big((2r-3)!!(r-1)\big)^{\frac{1}{r}}p^{\frac{1}{2r}}.
\end{equation}

Using this value of $B$ we get
\begin{equation}\label{WB}
\frac{(2WB)^{\frac{1}{2r}}}{B} \leq \left(\frac{2r(2r-1)}{r-1}\right)^{\frac{1}{2r}}(r-1)^{\frac{1}{2r^2}}\big((2r-3)!!\big)^{\frac{1}{2r^2}}p^{\frac{r+1}{4r^2}}.
\end{equation}

Now we must try to bound $\frac{AN}{p} + \log{(1.85A)}$. To do this, we can use the P\'olya--Vinogradov inequality to give an upper bound for $N$, since for $N$ large, the P\'olya--Vinogradov inequality would be a better bound than the Burgess inequality. Indeed, if
\begin{equation}\label{range}
N \geq p^{\frac{1}{2} + \frac{1}{4r}}\log{p},
\end{equation}
then, since $c_1(r)\ge 1$, we have $$c_1(r)N^{1-\frac{1}{r}}p^{\frac{r+1}{4r^2}}(\log{p})^{\frac{1}{r}} \ge \sqrt{p}\log{p}.$$
Therefore, from the P\'olya--Vinogradov inequality (see Section $9.4$ in \cite{MV2007}) we can conclude that $|S_{\chi}(M,N)|\le c_1(r)N^{1-\frac{1}{r}}p^{\frac{r+1}{4r^2}}\left(\log{p}\right)^{\frac{1}{r}},$ whenever we have \eqref{range}.

If we have $r\ge 3$, then we can use the Burgess inequality with $r-1$ instead of the P\'olya--Vinogradov inequality, to get a better upper bound on $N$. Indeed, if we let $s$ be a real number that satisfies
\begin{equation}\label{s condition}
c_1(r-1) \le s^{\frac{1}{r(r-1)}} c_1(r),
\end{equation}
then if
$$N\ge s\, p^{\frac{1}{4} + \frac{1}{2r} + \frac{1}{4r(r-1)}}\log{p},$$
then
$$c_1(r)N^{1-\frac{1}{r}}p^{\frac{r+1}{4r^2}}(\log{p})^{\frac{1}{r}}\ge c_1(r-1)N^{1-\frac{1}{r-1}}p^{\frac{r}{4(r-1)^2}}(\log{p})^{\frac{1}{r-1}}.$$

Similarly, we can put a lower bound on $N$, by noting that $|S_{\chi}(M,N)| \le N$. Indeed,
$$c_1(r)N^{1-\frac{1}{r}}p^{\frac{r+1}{4r^2}}(\log{p})^{\frac{1}{r}} \ge N,$$
whenever
\begin{equation*}\label{range low}
N\le c_1(r)^r p^{\frac{1}{4} + \frac{1}{4r}}\log{p}.
\end{equation*}

Therefore, we may assume that
\begin{equation}\label{range for N}
c_1(2)^2 p^{\frac{3}{8} }\log{p} < N < p^{\frac{5}{8}}\log{p},
\end{equation}
for $r=2$, and that
\begin{equation}\label{range2 for N}
c_1(r)^r p^{\frac{1}{4} + \frac{1}{4r}}\log{p} < N < s\, p^{\frac{1}{4} + \frac{1}{2r} + \frac{1}{4r(r-1)}}\log{p},
\end{equation}
for $r\ge 3$.

Using that $A = \frac{kN}{B}$, the upper bound for $N$ in \eqref{range for N}, and \eqref{burgess B}, we get \begin{equation}\label{log}
\frac{AN}{p} = \frac{kN^2}{pB} \le \frac{k p^{\frac{5}{4}}\log^2{p}}{p B} \le k\log^2{p} ,
\end{equation}
for $r=2$, and for $r\ge 3$, we get
\begin{equation}\label{log2}
\frac{AN}{p} = \frac{kN^2}{pB} \le \frac{s^2 k p^{\frac{1}{2} +\frac{1}{r}+\frac{1}{2r(r-1)}}\log^2{p}}{p B} = \frac{s^2 k}{\left((2r-3)!!(r-1)\right)^{\frac{1}{r}}p^{\frac{1}{2}-\frac{1}{2r}-\frac{1}{2r(r-1)}}}\log^2{p} ,
\end{equation}

Now we consider what happens to $\log{(1.85A)}$.
\begin{equation}\label{la otra vez}
 \log{(1.85A)} = \log{\left(\frac{1.85kN}{B}\right)} \leq \log{\left(1.85k\log{p}\right)}+\frac{3\log{p}}{8},
 \end{equation}
for $r=2$, and for $r\ge 3$, we get
\begin{equation}\label{la otra vez2}
 \log{(1.85A)} = \log{\left(\frac{1.85kN}{B}\right)} \leq \log{\left(\frac{1.85 s\,k\log{p}}{((2r-3)!!(r-1))^{\frac{1}{r}}}\right)}+\frac{\log p}{4} + \frac{\log{p}}{4r(r-1)} .
 \end{equation}

Now, let's bound the error term, the part we have labeled as $E(h)$.

For any $a,b$ such that $ab = h < N$, we have by induction hypothesis $E(h) \leq c_1(r)(ab)^{1-\frac{1}{r}}p^{\frac{r+1}{4r^2}}(\log p)^{\frac{1}{r}}$. Therefore,
\begin{multline}\label{error}
\frac{1}{c_1(r)p^{\frac{r+1}{4r^2}}(\log{p})^{\frac{1}{r}}}\cdot\frac{2}{H}\sum_{a,b} E(ab) \le\frac{2}{\lfloor A\rfloor \lfloor B\rfloor}\sum_{1\leq a\leq A}\sum_{1 \leq b \leq B} (ab)^{1-\frac{1}{r}}\\
\le 2\frac{1}{AB}\left(\int_1^{A+1} t^{1-\frac{1}{r}}\,dt\right)\left(\int_1^{B+1} t^{1-\frac{1}{r}}\,dt\right)\frac{AB}{(A-1)(B-1)}\\\le
(AB)^{1-\frac{1}{r}}\frac{2}{(2-\frac{1}{r})^2} \left(\frac{(A+1)(B+1)}{AB}\right)^{2-\frac{1}{r}}
\frac{AB}{(A-1)(B-1)}
\\ = \frac{2r^2}{(2r-1)^2}k^{1-\frac{1}{r}} N^{1-\frac{1}{r}}\left(\frac{(A+1)(B+1)}{AB}\right)^{2-\frac{1}{r}}
\frac{AB}{(A-1)(B-1)}.
 \end{multline}

Combining equations \eqref{Opti}, \eqref{WB}, \eqref{log}, \eqref{la otra vez} and \eqref{error} with \eqref{burgess ineq V/H + 2/H} yields (for $r=2$)
\begin{multline}\label{burgess super messy}
\frac{|S_{\chi}(M,N)|}{N^{\frac{1}{2}}p^{\frac{3}{16}}(\log{p})^{\frac{1}{2}}}\leq \frac{AB}{(A-1)(B-1)}\left(12\right)^{\frac{1}{4}}\left(1 + \frac{3}{8k\log{p}}  + \frac{\log{\left(1.85k\log{p}\right)}}{k \log^2{p}}\right)^{\frac{1}{4}} \\
+ \frac{8}{9}k^{\frac{1}{2}}c_1(2) \left(\frac{(A+1)(B+1)}{AB}\right)^{\frac{3}{2}}\frac{AB}{(A-1)(B-1)}. \end{multline}

Similarly, for $r\ge 3$, combining equations \eqref{Opti}, \eqref{WB}, \eqref{log2}, \eqref{la otra vez2} and \eqref{error} with \eqref{burgess ineq V/H + 2/H} yields
\begin{multline}\label{burgess super messy2}
\frac{|S_{\chi}(M,N)|}{N^{1-\frac{1}{r}}p^{\frac{r+1}{4r^2}}(\log{p})^{\frac{1}{r}}}\leq \left(\frac{2r(2r-1)}{r-1}\right)^{\frac{1}{2r}}\big((2r-3)!!(r-1)\big)^{\frac{1}{2r^2}}\frac{AB}{(A-1)(B-1)}
\\ \left(\frac{s^2}{\left((2r-3)!!(r-1)\right)^{\frac{1}{r}}p^{\frac{r-2}{2(r-1)}}} + \frac{1}{4k\log{p}} + \frac{1}{4r(r-1) k \log{p}} + \frac{\log{\left(\frac{1.85s\,k\log{p}}{((2r-3)!!(r-1))^{\frac{1}{r}}}\right)}}{k \log^2{p}}\right)^{\frac{1}{2r}} \\
+ \frac{2r^2}{(2r-1)^2}k^{1-\frac{1}{r}}c_1(r) \left(\frac{(A+1)(B+1)}{AB}\right)^{2-\frac{1}{r}}\frac{AB}{(A-1)(B-1)}. \end{multline}

Now, if we let $c_1(r)$ be defined as follows
\begin{equation}\label{c}
c_1(2) = \frac{AB}{(A-1)(B-1)}\left(12\right)^{\frac{1}{4}}\frac{\left(1 + \frac{3}{8k\log p}  + \frac{\log{\left(1.85k\log{p}\right)}}{k \log^2{p}}\right)^{\frac{1}{4}}}{1 - \frac{8}{9}k^{\frac{1}{2}} \left(\frac{(A+1)(B+1)}{AB}\right)^{\frac{3}{2}}\left(\frac{AB}{(A-1)(B-1)}\right)},
\end{equation}
for $r =2$, and
\begin{multline}\label{c2}
c_1(r) = \frac{AB}{(A-1)(B-1)}\left(\frac{2r(2r-1)\left((2r-3)!! (r-1)\right)^{\frac{1}{r}}}{r-1}\right)^{\frac{1}{2r}}\\
\cdot\frac{\left(\frac{s^2}{\left((2r-3)!!(r-1)\right)^{\frac{1}{r}}p^{\frac{1}{2}-\frac{1}{2r}-\frac{1}{2r(r-1)}}} + \frac{1}{4k\log{p}} + \frac{1}{4r(r-1) k \log{p}} + \frac{\log{\left(\frac{1.85s\,k\log{p}}{((2r-3)!!(r-1))^{\frac{1}{r}}}\right)}}{k \log^2{p}}\right)^{\frac{1}{2r}}}{1 - \frac{2r^2}{(2r-1)^2}k^{1-\frac{1}{r}}\left(\frac{(A+1)(B+1)}{AB}\right)^{2-\frac{1}{r}}\left(\frac{AB}{(A-1)(B-1)}\right)},
\end{multline}
for $r\ge 3$. Therefore from \eqref{burgess super messy} and \eqref{burgess super messy2}, we get that
$$|S_{\chi}(M,N)| \le c_1(r) N^{1-\frac{1}{r}}p^{\frac{r+1}{4r^2}}(\log{p})^{\frac{1}{r}}.$$

All we have to do is pick $k$ to minimize $c_1(r)$ in such a way that $\lfloor A\rfloor\ge 28$, and that $N\ge 12 A$. First, we'll start by showing that
\begin{equation*}\label{burgess B>12}
B \ge 15.
\end{equation*}

Since $B = ((2r-3)!!(r-1))^{\frac{1}{r}}p^{\frac{1}{2r}}$, we can just manually check for $ 2\le r\le 20$  that the inequality is satisfied. To show that it works for $r\ge 21$, we can show that
\begin{equation}\label{machin32}
((2r-3)!!(r-1))^{\frac{1}{r}} \ge 15,
\end{equation}
by noticing that it works for $r=21$ and that the left hand side of \eqref{machin32} is increasing. Indeed, the left hand side is increasing; by noticing that $(2r-3)(r-1) < (2r-1)(r+1)$, we get
$$(2r-3)!!(r-1) < \frac{(2r-1)^{r-1}(r+1)^{r-1}}{(r-1)^{r-2}} < \frac{(2r-1)^{r}(r+1)^{r}}{(r-1)^{r}},$$
implying that
$$\frac{1}{r}\log{((2r-3)!!(r-1))} < \log{((2r-1)(r+1))} - \log{(r-1)},$$
which implies
$$\frac{r+1}{r}\log{((2r-3)!!(r-1))} < \log{((2r-1)!!)} + \log{(r+1)},$$
and hence
$$\log{\left(((2r-3)!!(r-1))^{\frac{1}{r}}\right)} < \log{\left(((2r-1)!!)(r+1))^{\frac{1}{r+1}}\right)}.$$

Using that $B \ge 15$, since $A = \frac{kN}{B}$, then
\begin{equation*}\label{burgess A}
A = \frac{kN}{B} < \frac{kN}{12} < \frac{N}{12},
\end{equation*}
whenever $k < 1$.

Let $k \ge \frac{1}{30}$. To check that $\lfloor A\rfloor \ge 28$, we use \eqref{range2 for N} and \eqref{burgess B} and we note that
\begin{equation*}
\lfloor A\rfloor \ge A-1 \ge\frac{N}{30B}-1 \ge \frac{c_1(r)^r p^{\frac{1}{4}-\frac{1}{4r}}\log{p}}{30((2r-3)!!(r-1))^{\frac{1}{r}}}-1.
\end{equation*}
Table \ref{burgess table lower bounds for c} shows the lower bound $c_1(r)$ must satisfy to have $\lfloor A\rfloor\ge 28$ in different situations.

\begin{table}
\begin{center}
	\begin{tabular}{|c| c | c| c|}
\hline
	$r$ & $p\ge 10^7$ & $p\ge 10^{10}$ & $p\ge 10^{20}$ \\ \hline
$2$ & $2.68289$ & $1.45765$ & $0.24442$ \\  \hline

$3$ & $1.88354$ & $1.13939$ & $0.251637$ \\  \hline

$4$ & $1.6153$ & $1.06881$ & $0.305418$ \\  \hline

$5$ & $1.48379$ & $1.04807$ & $0.363232$ \\  \hline

$6$ & $1.40512$ & $1.04167$ & $0.417191$ \\  \hline

$7$ & $1.35216$ & $1.04007$ & $0.465518$ \\  \hline

$8$ & $1.31369$ & $1.04016$ & $0.508197$ \\  \hline

$9$ & $1.28422$ & $1.04077$ & $0.545749$ \\  \hline

$10$ & $1.26077$ & $1.04147$ & $0.578819$ \\  \hline
	\end{tabular}
\end{center}
\caption{Lower bounds for the constant $c_1(r)$ in the Burgess inequality to satisfy $\lfloor A\rfloor\ge 28$.}\label{burgess table lower bounds for c}
\end{table}

Now, $B$ is defined in terms of $r$ and $p$. By fixing an $r$ and a $p_0$ (a fixed lower bound for $p$), we can calculate $B$ in terms of $r$ and $p_0$. Let $c_1'(r)$ be a parameter satisfying $c_1'(r) < c_1(r)$. $A$ is written in terms of $k$ and $N$ and from \eqref{range for N} and \eqref{range2 for N} we have a range for $N$ in terms of $c_1(r)$, $p$, $r$, $k$ and $s$. From this we can find a lower bound for $A$ in terms of $c_1'(r)$, $k$, $r$, $s$ and $p$. The parameter $s$ is optimal when it is as small as possible so we fix $s$ (in terms of $r$, $c_1'(r)$ and $c_1(r-1)$) to be the smallest real satisfying \eqref{s condition}. After plugging in $A$, $B$, $r$, $s$, $k$, $p_0$ and $c_1'(r)$ to the equations \eqref{c} and \eqref{c2}, we can find a good value of $k \in [\frac{1}{30},1)$, and a good value of $c_1'(r)$ for each $r$ and $p_0$ to find the smallest $c_1(r)$ we can. After making the choices of $k$ and $c_1'(r)$ described in Table \ref{last table 1}, we conclude that $c_1(r)$ has the values listed in Table \ref{super table burgess} as upper bounds.

\begin{table}[h]
\begin{center}
	\begin{tabular}{c| c | c| c| c | c | c |}
\cline{2-7} & \multicolumn{2}{|c|}{$p_0 = 10^7$} & \multicolumn{2}{|c|}{$p_0 = 10^{10}$} & \multicolumn{2}{|c|}{$p_0 = 10^{20}$ }\\ \hline

\multicolumn{1}{|c|}{$r$} & $k$ & $c_1'(r)$ & $k$ & $c_1'(r)$ & $k$ & $c_1'(r)$ \\ \hline

\multicolumn{1}{|c|}{$2$} & 2/45 & 2.738 & 1/30 & 2.517 & 1/30 & 2.354\\  \hline

\multicolumn{1}{|c|}{$3$} & 1/16 & 2.019 & 11/150 & 1.737 & 2/15 & 1.369\\  \hline

\multicolumn{1}{|c|}{$4$} & 1/12 & 1.729 & 31/300 & 1.515 & 37/300 & 1.310\\  \hline

\multicolumn{1}{|c|}{$5$} & 1/12 & 1.610 & 7/75 & 1.456 & 31/300 & 1.298\\  \hline

\multicolumn{1}{|c|}{$6$} & 1/12 & 1.548 & 1/12 & 1.426 & 7/75 & 1.289\\  \hline

\multicolumn{1}{|c|}{$7$} & 11/150 & 1.504 &11/150 & 1.404 & 1/12 & 1.281\\  \hline

\multicolumn{1}{|c|}{$8$} & 19/300 & 1.470 & 19/300 & 1.383 & 1/12 & 1.272\\  \hline

\multicolumn{1}{|c|}{$9$} & 19/300 & 1.441 & 19/300 & 1.366 & 11/150 & 1.264\\  \hline

\multicolumn{1}{|c|}{$10$} & 4/75 & 1.415 & 4/75 & 1.349 & $11/150$ & 1.256\\  \hline
	\end{tabular}
\end{center}
\caption{Values chosen for $k$ and $c_1'(r)$ to build Table \ref{super table burgess}.}\label{last table 1}
\end{table}

\end{proof}

\begin{proof}[Proof of Corollary \ref{burgess corollary 1}]
We begin by pointing out that Theorem \ref{burgess kiks 1} proves this for $2\le r\le 10$ and $p \ge 10^{7}$. We also know that it is true for the $r=1$ case by the P\'olya--Vinogradov inequality (Vinogradov proved it with the constant 1 in \cite{Vinogradov}).

Following the proof of Theorem \ref{burgess kiks 1}, we also have that $B \ge 15$ for all $r$ and hence, for any $k < 1$, we have $A < \frac{N}{12}$. It is also worth pointing out that we can use $s = 1$, since now the constant $2.74$ is fixed as the constant in our upper bound, instead of a constant depending on $r$.

We need to show that you can pick a $k$ such that $\lfloor A\rfloor \ge 28$. First, let's prove that $2.74^r \ge ((2r-3)!!(r-1))^{\frac{1}{r}}$. Indeed, for all $r \ge 1$ we have
$$2.74^r > 2r \ge ((2r -3)!!(r-1))^{\frac{1}{r}}.$$

Now we have
$$A = \frac{kN}{B} \ge \frac{k (2.74)^r p^{\frac{1}{4}-\frac{1}{4r}}\log{p}}{((2r-3)!!(r-1))^{\frac{1}{r}}} \ge k p^{\frac{1}{4}-\frac{1}{4r}}\log{p} > 29,$$
whenever $k >  \frac{29}{p^{\frac{1}{4}-\frac{1}{4r}}\log{p}}$.

We replace $B$ by 15 in \eqref{c2} (since $B \ge 15$), and we can see that the only factors that don't decrease with $r$ are the $k^{1-\frac{1}{r}}$ term which appears in the denominator, and the $\left(2-\frac{1}{r}\right)$ exponent in the denominator. With this in mind, let $c(r)$ be defined as follows for $r \ge 4$:
\begin{multline}\label{cmatad}
c(r) = \frac{15A}{14(A-1)}\left(\frac{2r(2r-1)\left((2r-3)!! (r-1)\right)^{\frac{1}{r}}}{r-1}\right)^{\frac{1}{2r}}\\
\cdot\frac{\left(\frac{1}{\left((2r-3)!!(r-1)\right)^{\frac{1}{r}}p^{\frac{1}{2}-\frac{1}{2r}-\frac{1}{2r(r-1)}}} + \frac{1}{4k\log{p}} + \frac{1}{4r(r-1) k \log{p}} + \frac{\log{\left(\frac{1.85\,k\log{p}}{((2r-3)!!(r-1))^{\frac{1}{r}}}\right)}}{k \log^2{p}}\right)^{\frac{1}{2r}}}{1 - \frac{2r^2}{(2r-1)^2}k\left(\frac{16(A+1)}{15A}\right)^2\left(\frac{15A}{14(A-1)}\right)}.
\end{multline}

Letting $k = \frac{11}{64}$, $A \ge kp^{\frac{1}{4}-\frac{1}{4r}}$ and $p \ge 10^7 $ we confirm that $c(r) \le 2.74$ whenever $r\ge 3$. Since it is also true for $r\le3$, we conclude our corollary.
\end{proof}

\section{Improving McGown's theorem}\label{burgess section 2}
The main obstacle in improving the $(\log{p})^{\frac{1}{r}}$ factor in the Burgess inequality is the bound on $V_2$. However, if we put a bound on $N$, we can make the proof cleaner while also improving the exponent in $\log{p}$ to $\frac{1}{2r}$. First we prove a lemma regarding $V_2$ and then we will be able to prove Theorem \ref{burgess kiks 2}.

\begin{lemma}\label{burgess V2 improved}
Let $p$ be a prime, and $N$ be a positive integer. Let $A \ge 30$ be an integer such that $N>7A$ and $2AN< p$. Let $v(x)$ be defined as in \eqref{burgess v(x)}. Then
\begin{equation*}\label{burgess V2 eq}
V_2 = \sum_{x\kern-3pt\mod{p}}v^2(x) \leq 2AN \log(1.85 A).
\end{equation*}
\end{lemma}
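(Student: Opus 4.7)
The plan is to mirror the strategy of Lemma \ref{burgess V2 lemma}, but exploit the extra hypothesis $2AN < p$ to cut the dominant $\tfrac{AN}{p}$ contribution and retain only the logarithmic main term.

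First, I would decompose $V_2$ by counting quadruples $(a_1,a_2,n_1,n_2)$ with $a_i\in[1,A]$ and $n_j\in(M,M+N]$ subject to $a_1n_2\equiv a_2n_1\pmod p$. The diagonal $a_1=a_2$ contributes $AN$ exactly as before (using $N<p$, which follows from $2AN<p$ and $A\ge 1$). For the off-diagonal part, I would use the setup from Lemma \ref{burgess V2 lemma}: writing $a_1n_2-a_2n_1=kp$, the integer $k$ lies in an interval of length at most $\tfrac{(a_1+a_2)N}{p}$ and must be a multiple of $d=\gcd(a_1,a_2)$.

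The crucial simplification comes next. Since $a_1+a_2\le 2A$ and $2AN<p$, the allowed interval for $k$ has length strictly less than $1$, so it contains at most one integer — hence at most one admissible $k$. This replaces the bound $\tfrac{(a_1+a_2)N}{\gcd(a_1,a_2)p}+1$ of the previous proof by just $1$. Combining this with Lemma \ref{burgess counting lemma} for the count of $(n_1,n_2)$ once $(a_1,a_2,k)$ is fixed yields
\begin{equation*}
V_2 \le AN + 2\sum_{a_1<a_2}\left(\frac{\gcd(a_1,a_2)N}{\max(a_1,a_2)}+1\right) = AN + 2NS_3 + A^2 - A,
\end{equation*}
with $S_3$ as in \eqref{S_3}. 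Now I would plug in the estimate \eqref{burgess S3}, namely $S_3 \le \tfrac{6}{\pi^2}A\log(1.85A)+A-1$, to obtain
\begin{equation*}
V_2 \le \frac{12}{\pi^2}AN\log(1.85A) + 3AN + A^2 - A - 2N.
\end{equation*}

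It then remains to show that the extra non-logarithmic terms are absorbed by $\bigl(2-\tfrac{12}{\pi^2}\bigr)AN\log(1.85A)$, i.e.
\begin{equation*}
\left(1-\frac{6}{\pi^2}\right)\log(1.85A) \ge \frac{3}{2} + \frac{A-1}{2N} - \frac{1}{A}.
\end{equation*}
Here the hypotheses are chosen to be tight: $N>7A$ forces $\tfrac{A-1}{2N}<\tfrac{1}{14}$, so the right side is below $\tfrac{3}{2}+\tfrac{1}{14}\approx 1.5715$; while $A\ge 30$ makes the left side at least $\bigl(1-\tfrac{6}{\pi^2}\bigr)\log(55.5)\approx 1.5747$, with the gap only widening as $A$ grows. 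The main (though very mild) obstacle is this final verification, where the constraints $A\ge 30$ and $N>7A$ are calibrated precisely to leave room for the inequality; everything else follows directly from the previous proof with the $k$-count collapsed.
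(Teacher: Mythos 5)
Your proposal is correct and follows essentially the same route as the paper: the same quadruple count, the same observation that $2AN<p$ collapses the $k$-count to at most one value, the same reduction to $S_3$ via Lemma \ref{burgess counting lemma}, and the same final numerical verification using $A\ge 30$ and $N>7A$ (your version of the final inequality is in fact marginally weaker to verify, since you keep the $-\tfrac{1}{A}$ and $\tfrac{A-1}{2N}$ terms rather than bounding by $\tfrac{A}{2N}$). No gaps.
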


\begin{proof}
The proof is essentially the same as that of Lemma \ref{burgess V2 lemma}. Recall that $V_2$ is the number of quadruples $(a_1,a_2,n_1,n_2)$ with $1\leq a_1,a_2 \leq A$ and $M < n_1,n_2 \leq M + N$ such that $a_1n_2 \equiv a_2n_1\pmod p$. If $a_1 = a_2$, since $N < p$, we have that $n_1 = n_2$ because $n_1 \equiv n_2 \pmod p$ while $|n_1 - n_2| \leq N < p$. Therefore, the number of quadruples in this case is $AN$. Fixing $a_1\ne a_2$ and writing
$$a_1n_2-a_2n_1 = kp,$$
we can put a bound on possible values for $k$.  As shown in the proof of Lemma \ref{burgess V2 lemma}, there are at most $\frac{(a_1+a_2)N}{\gcd{(a_1,a_2)}p}  + 1$ values of $k$. Since $2AN < p$, then we have that $k$ is uniquely determined.

In the proof of Lemma \ref{burgess V2 lemma}, we showed that given $a_1,a_2$ and $k$, the number of pairs $(n_1,n_2)$ is bounded by $N\frac{\gcd{(a_1,a_2)}}{\max\{a_1,a_2\}} + 1$.

Now, for $A\ge 30$ and $N > 7A$ we have
\begin{equation}\label{orales}
\left(1-\frac{6}{\pi^2}\right)\log{(1.85 A)} \ge \left(1-\frac{6}{\pi^2}\right)\log{(55.5)} = 1.57471 > \frac{3}{2} + \frac{1}{14} > \frac{3}{2} + \frac{A}{2N}.
\end{equation}

Using the definition of $S_3$ as in \eqref{S_3}, using the inequalities \eqref{burgess S3} and \eqref{orales}, for $A\ge 30$ and $N>7A$, we have
\begin{multline*}
V_2 \le AN + 2\sum_{a_1 < a_2}\left(\frac{\gcd{(a_1,a_2)}N}{\max\{a_1,a_2\}} + 1\right)\\
= AN + 2NS_3 + A^2-A \le 2AN\log{(1.85A)}.
\end{multline*}
\end{proof}
Now we are ready to prove Theorem \ref{burgess kiks 2}.

\begin{proof}[Proof of Theorem \ref{burgess kiks 2}]
The proof is very similar to the proof of Theorem \ref{burgess kiks 1}. We proceed by induction, assuming that for all $h < N$ we have $|S_{\chi}(M,h)|\le c_2(r) p^{\frac{r+1}{4r^2}}(\log{p})^{\frac{1}{2r}}.$

Most of the work in the proof of Theorem $\ref{burgess kiks 1}$ can be replicated. So I'll just point out the things that change.

The first change is that by employing Lemma \ref{burgess V2 improved}, \eqref{burgess V2} becomes
\begin{equation*}\label{burgess V2 2}
V_2 \le 2AN\log{(1.85A)}.
\end{equation*}
This change affects \eqref{Opti}, by deleting $\frac{AN}{p}$ inside the parenthesis. Now it looks as follows:
\begin{equation}\label{burgess V/H3}
\frac{V}{H} \le \frac{AB}{(A-1)(B-1)}\frac{1}{k^{\frac{1}{2r}}}\frac{(2WB)^{\frac{1}{2r}}}{B}(\log{(1.85A)})^{\frac{1}{2r}}
N^{1-\frac{1}{r}}.
\end{equation}

The next change is the range for $N$, which we deduced by using the P\'olya--Vinogradov inequality, the trivial bound, and the case for $r-1$. Instead of \eqref{range for N}, using our hypothesis and the trivial bound, we now have
\begin{equation}\label{new range for N}
c_2(2)^r p^{\frac{3}{8}} \sqrt{\log{p}} < N < 2 p^{\frac{5}{8}},
\end{equation}
for $r=2$. Assuming $c_2(r-1) \le s^{\frac{1}{r(r-1)}}c_2(r)$ for a real number $s$, and using the Burgess inequality for $r-1$ we have, for $r\ge 3$, the following range for $N$
\begin{equation}\label{new range for N2}
c_2(r)^r p^{\frac{1}{4} + \frac{1}{4r}} \sqrt{\log{p}} < N < \min\{2 p^{\frac{1}{2} + \frac{1}{4r}},s\,p^{\frac{1}{4}+\frac{1}{2r}+\frac{1}{4r(r-1)}}\sqrt{\log{p}}\}.
\end{equation}

Using that $A=\frac{kN}{B}$ and \eqref{new range for N}, we get
\begin{equation}\label{la otra vez 2}
\log{(1.85A)}=\log{\left(\frac{1.85kN}{B}\right)}\le \log{\left(3.7k\right)}+\frac{3\log{p}}{8} \le \frac{3\log{p}}{8},
\end{equation}
for $r=2$ (we're assuming $k < 1/4$, which implies $\log{(3.7k)} < 0$). Using \eqref{new range for N2}, yields
\begin{equation}\label{la otra vez 22}
\log{(1.85A)}=\log{\left(\frac{1.85kN}{B}\right)}\le \log{\left(\frac{1.85s\,k\sqrt{\log{p}}}{((2r-3)!!(r-1))^{\frac{1}{r}}}\right)}+\frac{\log{p}}{4} + \frac{\log{p}}{4r(r-1)},
\end{equation}
for $r\ge 3$.

The bound for $E(h)$ is almost the same as in \eqref{error}, the only difference being the exponent of $\log{p}$, which is now $\frac{1}{2r}$ instead of $\frac{1}{r}$. Making this change and using both \eqref{WB} and \eqref{la otra vez 2} with \eqref{burgess V/H3} yields (for $r=2$)
\begin{multline}\label{burgess super messy 2}
\frac{|S_{\chi}(M,N)|}{N^{\frac{1}{2}}p^{\frac{3}{16}}(\log{p})^{\frac{1}{4}} }\le \frac{AB}{(A-1)(B-1)}\left(12\right)^{\frac{1}{4}} \left(\frac{3}{8k} \right)^{\frac{1}{4}} \\
+ \frac{AB}{(A-1)(B-1)}\frac{8}{9}k^{\frac{1}{2}}c_2(2) \left(\frac{(A+1)(B+1)}{AB}\right)^{\frac{3}{2}}. \end{multline}

For $r\ge 3$, using \eqref{WB} and \eqref{la otra vez 22} with \eqref{burgess V/H3} yields
\begin{multline}\label{burgess super messy 22}
\frac{|S_{\chi}(M,N)|}{N^{1-\frac{1}{r}}p^{\frac{r+1}{4r^2}}(\log{p})^{\frac{1}{2r}} }\le \frac{AB}{(A-1)(B-1)}\left(\frac{2r(2r-1)\left((2r-3)!!(r-1)\right)^{\frac{1}{r}}}{r-1}\right)^{\frac{1}{2r}}
\\
\cdot \left(\frac{\log{\left(\frac{1.85s\,k\sqrt{\log{p}}}{((2r-3)!!(r-1))^{\frac{1}{r}}}\right)}}{k\log{p}}+\frac{1}{4k} +\frac{1}{4r (r-1)k}\right)^{\frac{1}{2r}} \\
+ \frac{AB}{(A-1)(B-1)}\frac{2r^2}{(2r-1)^2}k^{1-\frac{1}{r}}c_2(r) \left(\frac{(A+1)(B+1)}{AB}\right)^{2-\frac{1}{r}}. \end{multline}

Now, if we let $c_2(r)$ be defined as follows
\begin{equation}\label{c21}
c_2(2) = \frac{A}{A-1}\frac{B}{B-1}\frac{\left(12\right)^{\frac{1}{4}}\left(\frac{3}{8k}\right)^{\frac{1}{4}}}{1 - \frac{8}{9}k^{\frac{1}{2}}\left(\frac{(A+1)(B+1)}{AB}\right)^{\frac{3}{2}}\left(\frac{AB}{(A-1)(B-1)}\right)},
\end{equation}
and, for $r\ge 3$,
\begin{equation}\label{c22}
c_2(r) = \frac{A}{A-1}\frac{B}{B-1}\frac{\left(\frac{2r(2r-1)\left((2r-3)!! (r-1)\right)^{\frac{1}{r}}}{r-1}\left(\frac{\log{\left(\frac{1.85s\,k\sqrt{\log{p}}}{((2r-3)!!(r-1))^{\frac{1}{r}}}\right)}}{k\log{p}}+\frac{1}{4k} +\frac{1}{4r (r-1)k}\right)\right)^{\frac{1}{2r}}}{1 - \frac{2r^2}{(2r-1)^2}k^{1-\frac{1}{r}}\left(\frac{(A+1)(B+1)}{AB}\right)^{2-\frac{1}{r}}\left(\frac{AB}{(A-1)(B-1)}\right)},
\end{equation}
for $r\ge 3$. Then, from \eqref{burgess super messy 2} and \eqref{burgess super messy 22}, we get that
$$|S_{\chi}(M,N)| \le c_2(r) N^{1-\frac{1}{r}}p^{\frac{r+1}{4r^2}}(\log{p})^{\frac{1}{2r}}.$$

All we have to do is pick $k$ to minimize $c_2(r)$ in such a way that $\lfloor A\rfloor\ge 30$, that $N> 7 A$ and $2AN < p$.

Using that $B \ge 15$, it is not hard to check that $N\ge 7A$. Indeed, since $A = \frac{kN}{B}$, we have $A \le \frac{kN}{15} < \frac{N}{7}$.

To check that $\lfloor A\rfloor \ge 30$ for $k\ge \frac{3}{64}$, we do the following:
\begin{equation*}
\lfloor A\rfloor\ge A-1 \ge \frac{3N}{64B} -1 \ge \frac{3c_2(r)^r p^{\frac{1}{4}-\frac{1}{4r}}\sqrt{\log{p}}}{64((2r-3)!!(r-1))^{\frac{1}{r}}}-1.
\end{equation*}
Table \ref{burgess table lower bounds for c 2} shows the lower bound $c$ must satisfy to have $\lfloor A\rfloor\ge 30$ in different situations.

\begin{table}[h]
\begin{center}
	\begin{tabular}{|c| c | c| c|}
\hline
	$r$ & $p\ge 10^{10}$ & $p\ge 10^{15}$ & $p\ge 10^{20}$ \\ \hline
$2$ & $2.78392$ & $1.22500$ & $0.55514$ \\  \hline

$3$ & $1.75393$ & $0.86474$ & $0.43480$ \\  \hline

$4$ & $1.47708$ & $0.81850$ & $0.46029$ \\  \hline

$5$ & $1.35767$ & $0.82260$ & $0.50431$ \\  \hline

$6$ & $1.29240$ & $0.83775$ & $0.54839$ \\  \hline

$7$ & $1.25127$ & $0.85450$ & $0.58848$ \\  \hline

$8$ & $1.22279$ & $0.87022$ & $0.62388$ \\  \hline

$9$ & $1.20171$ & $0.88422$ & $0.65489$ \\  \hline

$10$ & $1.18536$ & $0.89649$ & $0.68202$ \\  \hline
	\end{tabular}
\end{center}
\caption{Lower bounds for the constant $c_2(r)$ in the Burgess inequality to satisfy $\lfloor A\rfloor\ge 30$.}\label{burgess table lower bounds for c 2}
\end{table}

Let's now verify that $2AN < p$. Indeed, from the fact that $A=\frac{kN}{B}$ and from \eqref{new range for N}, we have
\begin{equation*}
2AN = \frac{2kN^2}{B} \le \frac{8kp}{((2r-3)!!(r-1))^{\frac{1}{r}}} < p,
\end{equation*}
whenever $k < \min\left\{\frac{((2r-3)!!(r-1))^{\frac{1}{r}}}{8},1\right\}.$

As in the proof of Theorem 1, we define can find bounds for $c_2(r)$ by controlling the parameters $p_0$, $c_2'(r)$ and $k$. We find a good value of $k \in [\frac{3}{64},\frac{((2r-3)!!(r-1))^{\frac{1}{r}}}{8})$ and a good value of $c_2'(r)$ for each $r$ and $p_0$, and plug in the values of $B$, $k$, and a lower bound bound for $A$ on \eqref{c21} to find $c_2(2)$ and on \eqref{c22} to find $c_2(r)$ for $r\ge 3$ in Table \ref{table kiks burgess} and conclude the theorem. The values of $k$ and $c_2'(r)$ we chose can be found on Table \ref{last table 2}.

\begin{table}[h]
\begin{center}
	\begin{tabular}{c| c | c| c| c | c | c |}
\cline{2-7} & \multicolumn{2}{|c|}{$p_0 = 10^{10}$} & \multicolumn{2}{|c|}{$p_0 = 10^{15}$} & \multicolumn{2}{|c|}{$p_0 = 10^{20}$ }\\ \hline

\multicolumn{1}{|c|}{$r$} & $k$ & $c_2'(r)$ & $k$ & $c_2'(r)$ & $k$ & $c_2'(r)$ \\ \hline

\multicolumn{1}{|c|}{$2$} & 0.124 & 3.65 & 0.124 & 3.58 & 0.124 & 3.57\\  \hline

\multicolumn{1}{|c|}{$3$} & 0.126 & 2.58 & 0.131 & 2.51 & 0.135 & 2.49\\  \hline

\multicolumn{1}{|c|}{$4$} & 0.106 & 2.19 & 0.116 & 2.12 & 0.120 & 2.10\\  \hline

\multicolumn{1}{|c|}{$5$} & 0.091 & 1.98 & 0.101 & 1.92 & 0.107 & 1.90\\  \hline

\multicolumn{1}{|c|}{$6$} & 0.080 & 1.85 & 0.090 & 1.79 & 0.095 & 1.77\\  \hline

\multicolumn{1}{|c|}{$7$} & 0.072 & 1.75 & 0.079 & 1.70 & 0.084 & 1.68\\  \hline

\multicolumn{1}{|c|}{$8$} & 0.064 & 1.68 & 0.071 & 1.635 & 0.077 & 1.61\\  \hline

\multicolumn{1}{|c|}{$9$} & 0.058 & 1.625 & 0.065 & 1.58 & 0.070 & 1.56\\  \hline

\multicolumn{1}{|c|}{$10$} & 0.054 & 1.579 & 0.060 & 1.54 & 0.064 & 1.52\\  \hline
	\end{tabular}
\end{center}
\caption{Values chosen for $k$ and $c_2'(r)$ to build Table \ref{table kiks burgess}.}\label{last table 2}
\end{table}
\end{proof}

\begin{proof}[Proof of Corollary \ref{burgess kiks corollary 2}]
By Theorem \ref{burgess kiks 2}, we have our desired result whenever $N < 2 p^{\frac{1}{2} + \frac{1}{4r}}$. Therefore, the only thing we need to prove is that for $p\ge 10^{10}$ and $r\ge 3$, $N < 2 p^{\frac{1}{2}+\frac{1}{4r}}$. Since the induction in the proof of Theorem \ref{burgess kiks 2} relied on the upper bound for $N$, we can't use the Burgess inequalities in Theorem \ref{burgess kiks 2} to give an upper bound for $N$ in this corollary. However, we can use the Burgess inequalities from Theorem \ref{burgess kiks 1} to improve the upper bound for $N$. Indeed, for $p\ge 10^{10}$, we have
$$|S_{\chi}(M,N)| \le 2.6 N^{1-\frac{1}{2}}p^{\frac{3}{16}}(\log{p})^{\frac{1}{2}}.$$
If
$$N \ge (2.6)^{\frac{2r}{r-1}}p^{\frac{3}{8}-\frac{1}{8r} - \frac{3}{8r(r-1)}}\sqrt{\log{p}},$$
then
$$N^{1-\frac{1}{r}}p^{\frac{r+1}{4r^2}}(\log{p})^{\frac{1}{2r}} \ge 2.6 N^{1-\frac{1}{2}}p^{\frac{3}{16}}\sqrt{\log{p}} \ge |S_{\chi}(M,N)|.$$

Therefore, we may assume that
\begin{equation}\label{burgess corollary 2 eq}
N \le (2.6)^{\frac{2r}{r-1}}p^{\frac{3}{8}-\frac{1}{8r} - \frac{3}{8r(r-1)}}\sqrt{\log{p}}.
\end{equation}

Now, all we need to conclude is to show that the right hand side of \eqref{burgess corollary 2 eq} is less than $2p^{\frac{1}{2} +\frac{1}{4r}}$. Using that $p \ge 10^{10}$, we can verify this manually for $r\in\{3,4,\ldots,21\}$. Now, for $r\ge 22$ we have
$$N \le (2.6)^{\frac{2r}{r-1}}p^{\frac{3}{8} -\frac{1}{8r} - \frac{3}{8r(r-1)}}\sqrt{\log{p}} \le (2.6)^{\frac{44}{21}}p^{\frac{3}{8}}\sqrt{\log{p}} < 2p^{\frac{1}{2}}.$$
The last inequality is true whenever $p\ge 10^{10}$.
\end{proof}

\begin{remark}
Booker and McGown in their proofs have $A$ range through only prime numbers. This idea makes the constants converge quicker. For large enough $p$, it doesn't improve the numbers, but it does for smaller $p$. To save space, we ommited using that technique here, instead focusing on other techniques that made an impact on the "asymptotic" constant. One of the nice ideas not used by McGown or Booker is the idea of using Burgess for smaller $r$ to help out with the larger $r$. This allows the theorems to extend to the whole range when $r \ge 3$.
\end{remark}

\begin{remark}
Theorem A is a little stronger in \cite{ETk} when the order of the character is bigger. Therefore, one could use that theorem to get better constants for cubic characters, quartic characters and so on.
\end{remark}

\section{Least $k$-th power non-residue}\label{section p^1/6}
To prove our results on the least $k$-th power non-residues, we will need the following estimates from \cite{RS1962}:

\begin{lemma}\label{Rosser and Scho}
Let $B = \displaystyle\lim_{m\rightarrow\infty} \sum_{p\le m}\frac{1}{p} - \log{\log{x}}$, and let $\pi(x)$ be the number of primes up to $x$. Then the following estimates are true:
\begin{align*}
\log{\log{x}} + B - \frac{1}{2\log^2{x}} < \sum_{p\le x} \frac{1}{p} & \mbox{ for } x > 1,\\
\sum_{p\le x} \frac{1}{p} < \log{\log{x}} + B + \frac{1}{2\log^2{x}} & \mbox{ for } x \ge 286,\\
\pi(x) < \frac{x}{\log{x}}\left(1 + \frac{3}{2\log{x}}\right) & \mbox{ for } x > 1.
\end{align*}
\end{lemma}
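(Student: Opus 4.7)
The three inequalities are the classical explicit estimates of Rosser and Schoenfeld: two quantitative forms of Mertens' second theorem and one quantitative form of the prime number theorem. My plan is to bootstrap everything from an explicit Chebyshev-type bound on $\psi(x) = \sum_{p^k\le x}\log p$ (equivalently on $\theta(x) = \sum_{p\le x}\log p$), and then pass to $\pi(x)$ and to $\sum_{p\le x}1/p$ by Abel summation.

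First I would establish the $\pi(x)$ estimate. Starting from the explicit formula $\psi(x) = x - \sum_{\rho}\frac{x^\rho}{\rho} - \log(2\pi) - \tfrac12\log(1-x^{-2})$ and the de la Vallée Poussin zero-free region $\sigma \ge 1 - c/\log(|t|+2)$ for $\zeta$, one obtains an explicit bound of the form $|\psi(x)-x| \le C_1\, x\exp(-c_1\sqrt{\log x})$. Passing from $\psi$ to $\theta$ via $\psi(x)-\theta(x) = \sum_{k\ge 2}\theta(x^{1/k}) = O(\sqrt{x})$ gives an analogous explicit bound for $\theta(x)-x$. Then partial summation
\[
\pi(x) = \frac{\theta(x)}{\log x} + \int_2^x \frac{\theta(t)}{t\log^2 t}\,dt
\]
converts this into an estimate for $\pi(x)$. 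Splitting $\theta(t) = t + (\theta(t)-t)$ in the integral produces the main term $\frac{x}{\log x} + \int_2^x \frac{dt}{\log^2 t}$, and integration by parts (or repeated integration by parts) on $\int \frac{dt}{\log^2 t}$ yields $\frac{x}{\log x}\bigl(1+\frac{1}{\log x}+\cdots\bigr)$. Adjusting the lower-order terms and absorbing the error gives $\pi(x) < \frac{x}{\log x}\bigl(1+\frac{3}{2\log x}\bigr)$ for $x$ beyond some explicit threshold; for the range $1 < x \le x_0$ one verifies the inequality numerically using tables of $\pi(x)$.

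For the Mertens sums, I would use Abel summation in the form
\[
\sum_{p\le x}\frac{1}{p} = \int_{2^-}^{x}\frac{d\theta(t)}{t\log t} = \frac{\theta(x)}{x\log x} + \int_2^x \theta(t)\,\frac{\log t + 1}{t^2\log^2 t}\,dt.
\]
Writing $\theta(t) = t + E(t)$, the term with $t$ produces the main term $\log\log x + \text{const}$, while the term with $E(t)$ contributes a tail $\int_x^\infty E(t)\frac{\log t+1}{t^2\log^2 t}\,dt$ that, after combining with the constants, reorganizes into $B + O(1/\log^2 x)$. The precise form of the error, together with the requirement that it actually be bounded by $\frac{1}{2\log^2 x}$, is where the constant $286$ enters: one needs $x$ large enough for the tail of $E(t)$ to fit under $\frac{1}{2\log^2 x}$, with direct computation handling the small cases. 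The lower bound is true for all $x>1$ because the corresponding constant can be absorbed more favorably on that side.

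The main obstacle is controlling the explicit constants. The analytic input (zero-free region plus explicit formula) is routine in principle but delicate numerically: one must pin down $c_1$ and $C_1$ in the bound on $\psi(x)-x$ sharply enough that the resulting partial-summation error never exceeds $\frac{1}{2\log^2 x}$ or $\frac{3}{2\log^2 x}$, respectively. Since the tail estimate degrades as $x$ shrinks, the cleanest strategy is to prove the inequalities asymptotically with room to spare and then certify the small-$x$ range by direct computation of $\pi(x)$ and of $\sum_{p\le x}1/p$ up to the stated thresholds, exactly as Rosser and Schoenfeld do in \cite{RS1962}.
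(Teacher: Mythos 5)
The paper offers no proof of this lemma at all: the three inequalities are quoted verbatim from Rosser and Schoenfeld \cite{RS1962}, and your sketch is a correct high-level account of how that reference establishes them (explicit bounds on $\psi$ and $\theta$, partial summation to reach $\pi(x)$ and $\sum_{p\le x}1/p$, and direct computation for small $x$). So your approach is consistent with the paper's implicit one, with the caveat that as written it is a roadmap whose explicit constants ($\tfrac12$, $\tfrac32$, and the threshold $286$) are entirely deferred to the numerical work carried out in \cite{RS1962}.
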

From it we derive the following immediate corollary:
\begin{corollary}\label{Vino corollary}
For real numbers $x,y$ satisfying $x > y > 1$ and $x \ge 286$, the following estimate is true:
\begin{equation*}
\sum_{y < p\le x} \frac{1}{p} < \log{\log{x}}-\log{\log{y}} + \frac{1}{2\log^2{x}} + \frac{1}{2\log^2{y}}.
\end{equation*}
\end{corollary}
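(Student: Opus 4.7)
The plan is to derive the corollary directly from the two-sided Mertens-type bounds in Lemma \ref{Rosser and Scho}. First I would split the sum over primes in the half-open interval $(y,x]$ as the difference
\begin{equation*}
\sum_{y < p \le x} \frac{1}{p} = \sum_{p \le x} \frac{1}{p} - \sum_{p \le y} \frac{1}{p},
\end{equation*}
which is valid since every prime $p$ with $y < p \le x$ contributes to the first sum but not to the second, and every prime $p \le y$ appears in both sums and cancels.

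Next, I would apply the upper bound from Lemma \ref{Rosser and Scho} to $\sum_{p \le x} \frac{1}{p}$, which requires $x \ge 286$, and the lower bound to $\sum_{p \le y} \frac{1}{p}$, which only requires $y > 1$ so that $\log\log y$ is defined. This gives
\begin{equation*}
\sum_{p \le x} \frac{1}{p} < \log\log x + B + \frac{1}{2\log^2 x}, \qquad \sum_{p \le y} \frac{1}{p} > \log\log y + B - \frac{1}{2\log^2 y}.
\end{equation*}

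Subtracting the second inequality from the first, the Mertens constant $B$ cancels, and the two error terms add rather than subtract (since the lower bound on the second sum contributes with a flipped sign), yielding exactly
\begin{equation*}
\sum_{y < p \le x} \frac{1}{p} < \log\log x - \log\log y + \frac{1}{2\log^2 x} + \frac{1}{2\log^2 y}.
\end{equation*}

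There is no real obstacle here: the result is a one-line consequence of the two bounds in Lemma \ref{Rosser and Scho}, and the only hypotheses used are exactly $x \ge 286$ (to apply the upper bound on $\sum_{p\le x} 1/p$) and $y > 1$ (to apply the lower bound on $\sum_{p\le y} 1/p$ and to have $\log\log y$ defined), both of which are in the statement. The condition $x > y$ is only used implicitly so that the left-hand side is a sum over a nonempty interval.
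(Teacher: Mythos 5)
Your proof is correct and is exactly the argument the paper has in mind: the paper presents this as an ``immediate corollary'' of Lemma \ref{Rosser and Scho}, obtained by splitting the sum as a difference and applying the upper bound at $x$ (using $x\ge 286$) and the lower bound at $y$ (using $y>1$), so that $B$ cancels and the error terms add.
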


Now we are ready to prove the key lemma (a lower bound on a character sum), which is the essence of Vinogradov's trick.
\begin{lemma}\label{lower bound non-residue}
Let $x\ge 286$ be a real number, and let $ y = x^{\frac{1}{\sqrt{e}} + \delta}$ for some $\delta > 0$. Let $\chi$ be a non-principal character $\bmod{\,p}$ for some prime $p$. If $\chi(n) = 1$ for all $n\le y$, then
\begin{equation*}
\left|\sum_{n\le x} \chi(n)\right| \ge x\left(2\log{(\delta\sqrt{e}+1)} - \frac{1}{\log^2{x}} - \frac{1}{\log^2{y}} - \frac{1}{x}\right).
\end{equation*}
\end{lemma}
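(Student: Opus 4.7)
The plan is to run Vinogradov's classical trick: the hypothesis $\chi(n)=1$ for $n\le y$ forces $\chi$ to equal $1$ on every $y$-smooth integer, and then the contribution to $S_\chi(0,x)$ from $n$ with $\chi(n)\ne 1$ is controlled by the count of integers $n\le x$ divisible by a prime exceeding $y$, which is small by Corollary \ref{Vino corollary}.

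First I would observe that since $\chi$ is completely multiplicative and $\chi(q)=1$ for every prime $q\le y$, we have $\chi(n)=1$ for every $y$-smooth $n\le x$. Consequently, if $n\le x$ satisfies $\chi(n)\ne 1$, then $n$ must have at least one prime factor $q>y$, and a union bound gives
\begin{equation*}
N_{\ne 1}:=\#\{n\le x:\chi(n)\ne 1\}\;\le\;\sum_{y<q\le x}\left\lfloor\frac{x}{q}\right\rfloor\;\le\;x\sum_{y<q\le x}\frac{1}{q}.
\end{equation*}
Writing $N_1=\#\{n\le x:\chi(n)=1\}$, so that $N_1+N_{\ne 1}=\lfloor x\rfloor$, and using that $\mathrm{Re}(\chi(n))\ge -1$ for every $n$, I get
\begin{equation*}
\Bigl|\sum_{n\le x}\chi(n)\Bigr|\;\ge\;\mathrm{Re}\!\sum_{n\le x}\chi(n)\;\ge\;N_1-N_{\ne 1}\;=\;\lfloor x\rfloor-2N_{\ne 1}\;\ge\;x-1-2x\sum_{y<q\le x}\frac{1}{q}.
\end{equation*}

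Next I would feed in Corollary \ref{Vino corollary}, legal since $x\ge 286$ and $y>1$, obtaining
\begin{equation*}
\sum_{y<q\le x}\frac{1}{q}\;<\;\log\log x-\log\log y+\frac{1}{2\log^2 x}+\frac{1}{2\log^2 y}.
\end{equation*}
The only small computation left is to evaluate $\log\log x-\log\log y$ when $y=x^{1/\sqrt{e}+\delta}$. Writing $\frac{1}{\sqrt{e}}+\delta=\frac{1+\delta\sqrt{e}}{\sqrt{e}}$ gives
\begin{equation*}
\log\log x-\log\log y\;=\;-\log\!\Bigl(\tfrac{1}{\sqrt{e}}+\delta\Bigr)\;=\;\tfrac{1}{2}-\log(1+\delta\sqrt{e}).
\end{equation*}

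Finally I substitute this into the lower bound for $|S|$ and simplify:
\begin{equation*}
\Bigl|\sum_{n\le x}\chi(n)\Bigr|\;\ge\;x-1-2x\!\left(\tfrac{1}{2}-\log(1+\delta\sqrt{e})+\tfrac{1}{2\log^2 x}+\tfrac{1}{2\log^2 y}\right)=x\!\left(2\log(1+\delta\sqrt{e})-\tfrac{1}{\log^2 x}-\tfrac{1}{\log^2 y}-\tfrac{1}{x}\right),
\end{equation*}
which is exactly the claimed inequality. There is really no hard step here — the one place to be a little careful is the passage from $\sum_{n\le x}\chi(n)$ to $N_1-N_{\ne 1}$, which must be justified via $\mathrm{Re}(\chi(n))\ge -1$ rather than assuming $\chi$ is real; everything else is direct application of the corollary and the identity $\log\sqrt{e}=1/2$.
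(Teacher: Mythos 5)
Your proof is correct and follows essentially the same route as the paper's: Vinogradov's trick, the union bound $N_{\ne 1}\le\sum_{y<q\le x}\lfloor x/q\rfloor$, Corollary \ref{Vino corollary}, and the identity $\log\log x-\log\log y=\tfrac12-\log(1+\delta\sqrt e)$. The only cosmetic difference is that you justify $|\sum_{n\le x}\chi(n)|\ge N_1-N_{\ne 1}$ via $\mathrm{Re}(\chi(n))\ge-1$, while the paper uses the reverse triangle inequality $|A+B|\ge|A|-|B|$ with $A=N_1$; both are equally valid.
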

\begin{proof}
Since $\chi(n)$ is totally multiplicative, $\chi(n) = 1$ for all $n \le y$, and $x < p$, then
\begin{align*}\label{vinogradov trick step 1}
\left|\sum_{n\le x} \chi(n)\right|
&= \left|\sum_{\substack{n\le x\\ \chi(n)=1}}\chi(n) + \sum_{\substack{n\le x\\ \chi(n)\ne 1}}\chi(n)\right|\\
&= \left|\sum_{n\le x} 1 -\sum_{\substack{n\le x\\ \chi(n)\ne 1}}1 + \sum_{\substack{n\le x\\ \chi(n)\ne 1}}\chi(n)\right|\\
&\ge \sum_{n\le x} 1 - 2\sum_{\substack{n \le x\\ \chi(n) \ne 1}}\chi(n)\\
&\ge \sum_{n\le x} 1 - 2\sum_{\substack{y < q \le x\\ \chi(q) \ne 1}}\sum_{n\le \frac{x}{q}}1,
\end{align*}
where the sum ranges over $q$ prime. Therefore we have
\begin{equation*}
\left|\sum_{n\le x} \chi(n)\right| \ge \left\lfloor x\right\rfloor - 2\sum_{y < q \le x} \left\lfloor\frac{x}{q}\right\rfloor \ge x - 1 - 2x\sum_{y < q \le x} \frac{1}{q} .
\end{equation*}
Using Corollary \ref{Vino corollary} to estimate the sum of the reciprocals of primes we get the desired inequality.
\end{proof}

We can now prove Theorem 3. We will use the explicit Burgess inequality proved as Corollary \ref{burgess corollary 1} because it works for all $r$.
\begin{proof}[Proof of Theorem \ref{theorem p^1.6}]
Let $\chi$ be a character $\bmod{\,p}$. Then if $n < p$ and $\chi(n) \neq 1$, $n$ is a $k$-th power non-residue. Let $r$ be an integer. Let $x\ge 286$ be a real number and let $y = x^{\frac{1}{\sqrt{e}}+\delta} = p^{1/6}$ for some $\delta>0$. Assume that $\chi(n) = 1$ for all $n\le y$. Now by Corollary \ref{burgess corollary 1} and Lemma \ref{lower bound non-residue} we have
\begin{equation*}
2.74x^{1-\frac{1}{r}} p^{\frac{r+1}{4r^2}}(\log{p})^{\frac{1}{r}} \ge x\left(2\log{(\delta\sqrt{e}+1)} - \frac{1}{\log^2{x}} - \frac{1}{\log^2{y}} - \frac{1}{x}\right).
\end{equation*}
Now, letting $x = p^{\frac{1}{4} + \frac{1}{2r}}$ we get
\begin{equation}\label{p^1/6}
2.74 p^{\frac{\log{\log{p}}}{r \log{p}} - \frac{1}{4r^2}} \ge 2\log{(\delta\sqrt{e}+1)} - \frac{1}{\log^2{x}} - \frac{1}{\log^2{y}} - \frac{1}{x}.
\end{equation}
Picking $r = 22$, one finds that $\delta = 0.00458\ldots$. For $p \ge 10^{4732}$, the right hand side of \eqref{p^1/6} is bigger than the left hand side, showing that $\chi(n)$ is not always 1 for $n \le y = p^{1/6}$, and hence the theorem is true.
\end{proof}

\begin{remark}
To be able to use Theorem \ref{burgess kiks 2} to improve Theorem \ref{theorem p^1.6}, we would need to calculate what happens for $r > 20$ since the restriction $y = p^{1/6}$ implies $r > 20$. Since we know that $p$ will be large, we can also pick a large $p_0$ and then find a good constant for the Burgess inequality when $p$ is very large and $r > 20$. After doing all of this work, one could show that Theorem \ref{theorem p^1.6} works for $p \ge 10^{3850}$.
\end{remark}
\nocite{Fri1987}
\nocite{PPP}

\section*{Acknowledgements} I would like to dedicate this paper to Paul Bateman and Heini Halberstam. I would like to thank Carl Pomerance for suggesting the problem, his useful comments and his encouragement. I would like to express my gratitude to Andrew Granville, Hugh Montgomery and Robert Vaughan for allowing me to look at unpublished manuscripts that helped me on my work on the Burgess inequality. I would also like to thank Paul Pollack who directed me to McGown's work. Finally, I'd like to thank an anonymous referee that read the paper carefully and suggested many improvements.

\bibliographystyle{amsplain}
\bibliography{bsample}

\providecommand{\bysame}{\leavevmode\hbox to3em{\hrulefill}\thinspace}
\providecommand{\MR}{\relax\ifhmode\unskip\space\fi MR }
\providecommand{\MRhref}[2]{%
  \href{http://www.ams.org/mathscinet-getitem?mr=#1}{#2}
}
\providecommand{\href}[2]{#2}
\begin{thebibliography}{10}

\bibitem{Ank1952}
N.~C. Ankeny, \emph{The least quadratic non residue}, Ann. of Math. (2)
  \textbf{55} (1952), 65--72. \MR{0045159 (13,538c)}

\bibitem{Bach1990}
Eric Bach, \emph{Explicit bounds for primality testing and related problems},
  Math. Comp. \textbf{55} (1990), no.~191, 355--380. \MR{1023756 (91m:11096)}

\bibitem{Boo2006}
Andrew~R. Booker, \emph{Quadratic class numbers and character sums}, Math.
  Comp. \textbf{75} (2006), no.~255, 1481--1492 (electronic). \MR{2219039
  (2008a:11140)}

\bibitem{Bu1949}
A.~A. Buh{\v{s}}tab, \emph{On those numbers in an arithmetic progression all
  prime factors of which are small in order of magnitude}, Doklady Akad. Nauk
  SSSR (N.S.) \textbf{67} (1949), 5--8. \MR{0030995 (11,84b)}

\bibitem{Bur1957}
D.~A. Burgess, \emph{The distribution of quadratic residues and non-residues},
  Mathematika \textbf{4} (1957), 106--112. \MR{0093504 (20 \#28)}

\bibitem{Bur1962}
\bysame, \emph{On character sums and primitive roots}, Proc. London Math. Soc.
  (3) \textbf{12} (1962), 179--192. \MR{0132732 (24 \#A2569)}

\bibitem{Bur19632}
\bysame, \emph{A note on the distribution of residues and non-residues}, J.
  London Math. Soc. \textbf{38} (1963), 253--256. \MR{0148628 (26 \#6135)}

\bibitem{Bur1963}
\bysame, \emph{On character sums and {$L$}-series. {II}}, Proc. London Math.
  Soc. (3) \textbf{13} (1963), 524--536. \MR{0148626 (26 \#6133)}

\bibitem{Bur1986}
\bysame, \emph{The character sum estimate with {$r=3$}}, J. London Math. Soc.
  (2) \textbf{33} (1986), no.~2, 219--226. \MR{838632 (87g:11098)}

\bibitem{DE1952}
H.~Davenport and P.~Erd{\"o}s, \emph{The distribution of quadratic and higher
  residues}, Publ. Math. Debrecen \textbf{2} (1952), 252--265. \MR{0055368
  (14,1063h)}

\bibitem{Fri1987}
John Friedlander, \emph{Primes in arithmetic progressions and related topics},
  Analytic number theory and {D}iophantine problems ({S}tillwater, {OK}, 1984),
  Progr. Math., vol.~70, Birkh\"auser Boston, Boston, MA, 1987, pp.~125--134.
  \MR{1018373 (90h:11086)}

\bibitem{Hardy}
G.~H. Hardy and E.~M. Wright, \emph{An introduction to the theory of numbers},
  Oxford, at the Clarendon Press, 1954, 3rd ed. \MR{0067125 (16,673c)}

\bibitem{Hil1986}
Adolf Hildebrand, \emph{A note on {B}urgess' character sum estimate}, C. R.
  Math. Rep. Acad. Sci. Canada \textbf{8} (1986), no.~1, 35--37. \MR{827113
  (87e:11095)}

\bibitem{IK2004}
Henryk Iwaniec and Emmanuel Kowalski, \emph{Analytic number theory}, American
  Mathematical Society Colloquium Publications, vol.~53, American Mathematical
  Society, Providence, RI, 2004. \MR{2061214 (2005h:11005)}

\bibitem{McG3}
Kevin~J. McGown, \emph{Norm-{E}uclidean cyclic fields of prime degree}, Int. J.
  Number Theory \textbf{8} (2012), no.~1, 227--254. \MR{2887892}

\bibitem{MV2007}
Hugh~L. Montgomery and Robert~C. Vaughan, \emph{Multiplicative number theory.
  {I}. {C}lassical theory}, Cambridge Studies in Advanced Mathematics, vol.~97,
  Cambridge University Press, Cambridge, 2007. \MR{2378655 (2009b:11001)}

\bibitem{MM1966}
L.~Moser and R.~A. MacLeod, \emph{The error term for the squarefree integers},
  Canad. Math. Bull. \textbf{9} (1966), 303--306. \MR{0200251 (34 \#150)}

\bibitem{Nor1971}
Karl~K. Norton, \emph{Numbers with small prime factors, and the least {$k$}th
  power non-residue}, Memoirs of the American Mathematical Society, No. 106,
  American Mathematical Society, Providence, R.I., 1971. \MR{0286739 (44
  \#3948)}

\bibitem{RS1962}
J.~Barkley Rosser and Lowell Schoenfeld, \emph{Approximate formulas for some
  functions of prime numbers}, Illinois J. Math. \textbf{6} (1962), 64--94.
  \MR{0137689 (25 \#1139)}

\bibitem{ET}
Enrique Trevi\~no, \emph{Numerically explicit estimates for character sums},
  2011, Thesis (Ph.D.)--Dartmouth College.

\bibitem{ETk}
Enrique Trevi{\~n}o, \emph{The least {$k$}-th power non-residue}, J. Number
  Theory \textbf{149} (2015), 201--224. \MR{3296008}

\bibitem{Vinogradov}
Ivan~Matveevi{\v{c}} Vinogradov, \emph{Selected works}, Springer-Verlag,
  Berlin, 1985, With a biography by K. K. Mardzhanishvili, Translated from the
  Russian by Naidu Psv [P. S. V. Naidu], Translation edited by Yu. A.
  Bakhturin. \MR{807530 (87a:01042)}

\bibitem{Weil}
Andr{\'e} Weil, \emph{On some exponential sums}, Proc. Nat. Acad. Sci. U. S. A.
  \textbf{34} (1948), 204--207. \MR{0027006 (10,234e)}

\bibitem{Yuan}
W.~Yuan, \emph{Estimation and application of character sums}, Shuxue Jinzhan
  \textbf{7} (1964), 78--83.

\end{thebibliography}

\end{document}